\theoremstyle{plain}
\newtheorem{thm}{Theorem}[section]
\newtheorem{lemma}[thm]{Lemma}
\newtheorem{prop}[thm]{Proposition}
\newtheorem{cor}[thm]{Corollary}
\newtheorem*{thm*}{Theorem}
\newtheorem*{lemma*}{Lemma}
\newtheorem*{prop*}{Proposition}
\newtheorem*{cor*}{Corollary}
\newtheorem*{conj*}{Conjecture}
\theoremstyle{definition}
\newtheorem{defn}[thm]{Definition}
\newtheorem{ex}[thm]{Example}
\newtheorem{rmk}[thm]{Remark}
\newcommand{\rr}{\mathbb{R}}
\newcommand{\cm}{\mathcal{M}}
\newcommand{\ind}{\mbox{$\perp \kern-5.5pt \perp$}}
\title{Graph-based sufficient conditions for indistinguishability of linear compartmental models}
\author[Bortner]{Cashous Bortner}
\address{California State University, Stanislaus}
\author[Meshkat]{Nicolette Meshkat}
\address{Santa Clara University}
\date{\today}
\begin{document}
\maketitle

\begin{abstract}
An important problem in biological modeling is choosing the right model.  Given experimental data, one is supposed to find the best mathematical representation to describe the real-world phenomena.  However, there may not be a unique model representing that real-world phenomena.  Two distinct models could yield the same exact dynamics.  In this case, these models are called \textit{indistinguishable}.  In this work, we consider the indistinguishability problem for linear compartmental models, which are used in many areas, such as pharmacokinetics, physiology, cell biology, toxicology, and ecology.  We exhibit sufficient conditions for indistinguishability for models with a certain graph structure: paths from input to output with ``detours''.  The benefit of applying our results is that indistinguishability can be proven using only the graph structure of the models, without the use of any symbolic computation.  This can be very helpful for medium-to-large sized linear compartmental models.  These are the first sufficient conditions for indistinguishability of linear compartmental models based on graph structure alone, as previously only necessary conditions for indistinguishability of linear compartmental models existed based on graph structure alone.  We prove our results by showing that the indistinguishable models are the same up to a renaming of parameters, which we call \textit{permutation indistinguishability}. %We use computational algebra and graph theory to prove our results.  
\end{abstract}

% https://tex.stackexchange.com/questions/104058/getting-emph-back-to-normal-after-loading-ulem
\normalem

%*******************************************************************
% INTRO
%*******************************************************************
\section{Introduction} \label{sec:intro}

Two state space models are \emph{indistinguishable} if for any
choice of parameters in the first model, there is a choice of parameters in the second model that will yield the same dynamics in both models, and vice versa.  The indistinguishability problem can be related to the problem of structural identifiability, which concerns determining if all of the unknown parameters in a model can be recovered from input/output data, although it is not necessary to test identifiability in order to test indistinguishability \cite{godfrey-chapman-vajda-1994, walter1984, zhang1991}.

Previous work has focused on testing indistinguishability of given models and also on finding the set of all indistinguishable models from a given model.  Vajda \cite{vajda1982} presents the problem in terms of structural equivalence and gives a method for generating a whole set of indistinguishable models.  Godfrey and Chapman \cite{godfrey-chapman} give necessary conditions for indistinguishability based on the graph structure of the model, which they call geometric rules.  Zhang et al \cite{zhang1991} present a method applying the Godfrey and Chapman geometric rules to generate all identifiable indistinguishable models given a starting model.  Davidson et al \cite{davidson2017} present the program DISTING to compute all indistinguishable models from an identifiable subgraph of a given linear compartmental model.  There has also been some work on indistinguishability of nonlinear compartmental models \cite{godfrey-chapman-vajda-1994, chapman1994, chapman1996}.  Yates et al \cite{yates} review the problems of identifiability and indistinguishability for compartmental models.

While there are necessary conditions for indistinguishability via the graph structure (see the Godfrey-Chapman geometric rules in Section \ref{subsection:gfgeorules}) that can be applied without the use of symbolic computation, to the best our knowledge, there are no sufficient conditions for indistinguishability based on the graph structure. Our goal in this work is to find sufficient conditions for indistinguishability based on graph structure for a certain class of models.  The starting point is a simple \textit{path model} from input to output and we examine the effect of creating and moving a ``detour'' to the path.  We prove our results by showing that the models we consider are indistinguishable via a permutation, or renaming, of parameters.  This renaming of parameters allows us to easily prove indistinguishability. 

{  Path models from an input to an output with one or more leaks arise in physiological models involving metabolism, biliary, or excretory pathways \cite{distefano-book} and models of neuronal dendritic trees \cite{bressloff1993compartmental}.  Some examples of path models with a ``detour" are given in \cite{distefano-book} (see Examples 4.1 and 4.2) where there is a simple path from input to output and there is an exchange pathway that occurs off of the main path (representing the ``detour").  Path models also arise when modeling the delayed response to input and are called
time-delay models and can arise from oral dosing losses and delays in the gastrointestinal tract \cite{distefano-book}.}

{  We were motivated by the following question: given a path from compartment $1$ to compartment $n$, with an input in compartment $1$,  an output in compartment $n$, and a leak from compartment $1$, can we move the leak and get an indistinguishable model? Alternatively, can we change the leak to an edge between two compartments and get an indistinguishable model?   }

Our main results are the following.  Given a path from compartment $1$ to compartment $n$, with input in compartment $1$ and output in compartment $n$, the class of models with a leak from compartment $i$, where $i=1,\ldots,n-1$, are all indistinguishable (see Theorem \ref{thm:leak}).  Replacing the leak with an edge from compartment $n$ to compartment $n-1$ also gives an indistinguishable model (see Theorem \ref{thm:terminalcycle}).  We also generalize Theorem \ref{thm:leak} to apply to moving any ``detour'' down the path (see Theorem \ref{thm:detour}).  Finally, we show how to generalize even further with multiple source-sink paths (see Corollary \ref{cor:sink} and Corollary \ref{cor:source}).

To the best of our knowledge, our results provide the first sufficient conditions for indistinguishability of linear compartmental models based on graph structure alone.  Thus, with our results, indistinguishability can be proven without the use of symbolic computation for a certain class of path models.  

The outline of our paper is the following.  We first give some background on linear compartmental models, input-output equations, and identifiability in Section \ref{section:background}.  We then define indistinguishability and provide many motivating examples in Section \ref{section:indist}. We then provide sufficient conditions for indistinguishability for paths with leaks (Section \ref{section:leak}), paths with detours (Section \ref{section:detour}), and multiple sinks or multiple sources (Section \ref{section:sink} and Section \ref{section:source}).  Finally, we discuss our results and future work in Section \ref{section:discussion}.

\section{Background} \label{section:background} 

\subsection{Linear Compartmental Models}

Let $G$ be a directed graph with vertex set $V$ and set of 
directed edges $E$.  Each vertex $i \in V$ corresponds to a
compartment in our model and an edge $j \rightarrow i$ denotes 
a direct flow of material from compartment $j$ to
compartment $i$.  Also introduce three subsets of the vertices
$In, Out, Leak \subseteq V$ corresponding to the
set of input compartments, output compartments, and leak compartments,
respectively.  To each edge $j \rightarrow i$ we associate
an independent parameter $a_{ij}$, the rate of flow
from compartment $j$ to compartment $i$.  
To each leak node $i \in Leak$, we associate an independent
parameter $a_{0i}$, the rate of flow from compartment $i$ leaving the system.

We associate a matrix $A(G)$, called the \textit{compartmental matrix} {  \cite{linear-i-o}} to the graph and the set $Leak$
 in the following way:
\[
  A(G)_{ij} = \left\{ 
  \begin{array}{l l l}
    -a_{0i}-\sum_{k: i \rightarrow k \in E}{a_{ki}} & \quad \text{if $i=j$ and } i \in Leak\\
        -\sum_{k: i \rightarrow k \in E}{a_{ki}} & \quad \text{if $i=j$ and } i \notin Leak\\
    a_{ij} & \quad \text{if $j\rightarrow{i}$ is an edge of $G$}\\
    0 & \quad \text{otherwise}\\
  \end{array} \right.
\]
For brevity, we will often
 use $A$ to denote $A(G)$. {  Note that when there are no leaks, the compartmental matrix $A(G)$ is the same as the negative Laplacian matrix associated to the graph.  If there are leak terms, then the compartmental matrix $A(G)$ can be described as the negative Laplacian matrix minus a diagonal matrix of leak parameters.  This fact can be useful in proofs, see for example Theorem 2.6 of \cite{MeshkatSullivantEisenberg}.} % Also, define the vector $\mathcal{A} \in \rr^{|E|+|Leak|}$ consisting of nonzero parameters of $A$.

Then we construct a system of linear ODEs with inputs and outputs associated to the quadruple
$(G, In, Out, Leak)$ as follows:
\begin{equation} \label{eq:main}
\dot{x}(t)=Ax(t)+u(t)  \quad \quad y_i(t)=x_i(t)  \mbox{ for } i \in Out
\end{equation}
 where $u_{i}(t) \equiv 0$ for $i \notin In$.
 The coordinate functions $x_{i}(t)$ are the state variables, the 
 functions $y_{i}(t)$ are the output variables, and the nonzero functions $u_{i}(t)$ are
 the inputs.  The resulting model is called a   \textit{linear compartmental model}. 

We will indicate output compartments by this symbol: \begin{tikzpicture}[scale=0.7]
 	\draw (4.66,-.49) circle (0.05);	
	 \draw[-] (5, -.15) -- (4.7, -.45);	
\end{tikzpicture} .  
Input compartments are labeled by ``in'', and leaks are indicated by edges which go to no vertex.

\begin{figure} { 
    \centering
\begin{tikzpicture}[scale=.9]
%-----------------------------
% M/G
%-----------------------------
 	\draw (0,0) circle (0.3);
 	\draw (2,0) circle (0.3);
% 	\draw (4,0) circle (0.3);
% LABEL: numbers
    	\node[] at (0, 0) {1};
    	\node[] at (2, 0) {2};
%    	\node[] at (4, 0) {$3$};
% ARROWS
	 \draw[->] (0.35, .1) -- (1.65, .1);
	 \draw[<-] (0.35, -.1) -- (1.65, -.1);
	 
%	 \draw[->] (2.35,.1) -- (3.65,.1);
	 %\draw[<-] (2.35,-.1) -- (3.65,-.1);
	 %\draw[<-] (4.35,-.1) -- (5.65,-.1);
% RATES
   	 \node[] at (1, 0.35) {$a_{21}$};
   	 \node[] at (1,-.35) {$a_{12}$};
%	\node[] at (3,0.3) {$a_{32}$};
	%\node[] at (3,-.3) {$a_{23}$};
	%\node[] at (5,-.3) {$a_{34}$};
%OUTPUT 
	\draw (2.69,.69) circle (0.07);	
	 \draw[-] (2.65, .65 ) -- (2.22, .22);
% Input 
	 \draw[->] (-.65, .65) -- (-.25, .25);	
   	 \node[] at (-.8,.8) {in};
% Leaks
	% \draw[->] (0,-.3) -- (0, -.8);	
   	 %\node[] at (.35, -.5) {$a_{01}$};
	 \draw[->] (0,-.3) -- (0, -.9);	
   	 \node[] at (0.35, -.7) {$a_{01}$};
	 %\draw[->] (4,.3) -- (4, .8);	
   	 %\node[] at (4.3, .5) {$a_{03}$};
	 %\draw[->] (6,.3) -- (6, .8);	
   	 %\node[] at (6.3, .5) {$a_{04}$};
 % BOX
\draw (-1.4,-2) rectangle (3, 2);
% LABEL: Model name
    	\node[] at (1, -1.5) {$\cm$};
    	
%-----------------------------

\end{tikzpicture}
    \caption{Model described in Example \ref{ex:continuing}.}
    \label{fig:firstexample}}
\end{figure}
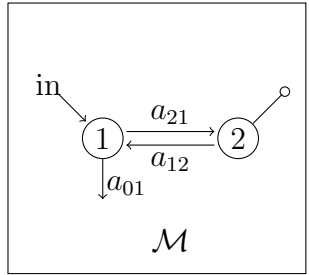

\begin{ex} \label{ex:continuing} The model $\cm=(G,\{1\},\{2\},\{1\})$ with $G$ given in { Figure \ref{fig:firstexample}} 
is a linear compartmental model with equations given by:
{\small
\begin{align} \label{eq:A-ex}
\begin{pmatrix}
\dot{x}_1 \\
\dot{x}_2 
\end{pmatrix} 
&~=~
\begin{pmatrix}
- a_{01} - a_{21} & a_{12} \\
a_{21} & -a_{12}
\end{pmatrix}
\begin{pmatrix}
x_1 \\
x_2
\end{pmatrix} +
\begin{pmatrix}
u_1 \\
0
\end{pmatrix}~,
\end{align}
}

with 
output equation $y_2=x_2$. 
\end{ex}

\begin{defn} A \textit{path} from vertex $i_{0}$ to vertex $i_{k}$ in a directed graph $G$ is a sequence of 
vertices $i_{0},i_{1}, i_{2}, \ldots, i_{k}$
such that $i_{j} \to i_{j+1}$ is an edge for all $j = 0, \ldots, k-1$.
%To a path $P = i_{0},i_{1}, i_{2}, \ldots, i_{k}$, we associate the
%monomial $a^{P} = a_{i_{0}i_{1}}a_{i_{1}i_{2}}\cdots a_{i_{k-1}i_{k}}$,
%which we refer to as a \textit{monomial path}.  If a monomial path $a^{P}$ has
%length $k$, we refer to it as a $k$-path.  
\end{defn}

Now we give some definitions from \cite{linear-i-o} regarding an important subgraph to this work:

\begin{defn} \label{defn:outputreachable} For a linear compartmental model $\cm=(G,In,Out,Leak)$, let $i \in Out$.  The \textit{output-reachable subgraph to $i$} (or \textit{to $y_i$}) is the induced subgraph of $G$ containing all vertices $j$ for which there is a directed path in $G$ from $j$ to $i$. A linear compartmental model is \textit{output connectable} if every compartment has a directed path leading from it to an output compartment. Likewise, a linear compartmental model is \textit{input connectable} if every compartment has a directed path leading to it from an input compartment.
\end{defn}

%\begin{defn} A linear compartmental model $\cm=(G,In,Out,Leak)$ is \textit{output connectable to every output} if every compartment has a directed path leading from it to every output compartment.  
%\end{defn}

\subsection{Input-output equations}

We will be using the so-called \textit{differential algebra approach} to find \textit{input-output equations} to analyze both structural identifiability \cite{Ljung,Ollivier} and indistinguishability.  In the differential algebra approach, we view the model equations as differential polynomials in a differential polynomial ring $R(p)[u,y,x]$, i.e., the ring of polynomials in state variable vector $x$, output vector $y$, input vector $u$, and their derivatives, with coefficients in $R(p)$ for parameter vector $p$. Since the unmeasured state variables $x_i$ cannot be determined, the goal in this approach is to use differential elimination to eliminate all unknown state variables and their derivatives.  The resulting equations are only in terms of input variables, output
variables, their derivatives, and parameters, so these equations have the following form:
\begin{align} \label{eq:general-i-o}
\sum_i{c_i(p)\Psi_i(u,y)} =0~
\end{align}
{ where $c_i(p)$ and $\Psi_i(u,v)$ are defined below.}  An equation of the form~\eqref{eq:general-i-o} is called an \textit{input-output equation} for
$\mathcal M$. 

For nonlinear models, one standard ``reduced'' generating set for these input-output equations is formed by those equations in a \textit{characteristic set} (defined precisely in~\cite{glad}{ , but briefly, it is a triangular set in reduced form, according to an ordering of variables})
that do not involve the $x_i$'s or their derivatives.
In a characteristic set, which can be computed using the software {\tt DAISY}~\cite{daisy},
each $\Psi_i(u,y)$ in each input-output equation~\eqref{eq:general-i-o} 
is a differential monomial, i.e., a monomial purely
in terms of input variables, output variables, and their derivatives. The terms $c_i(p)$ are called the coefficients of the input-output equations.  These coefficients can be fixed uniquely by normalizing the input-output equations to make them monic \cite{daisy}.

However, for linear models, it has been shown that these input-output equations can be found much more easily using the Transfer Function approach \cite{Bellman} or even a trick with Cramer's Rule \cite{MeshkatSullivant}.  We will be taking the latter approach to get an explicit formula for the input-output equations.  

We now state Theorem 3.8 from \cite{linear-i-o} with input $i$ and output $j$, which gives the input-output equation in $y_j$ in terms of the output-reachable subgraph to $y_j$.

\begin{thm} \label{thm:ioscc} 
Let $\mathcal{M}=(G, In, Out, Leak)$
be a linear compartmental model with at least one input.
Let $j \in Out$, 
and assume that there exists a directed path from some input compartment 
to compartment-$j$. 
Let $ H$ denote the output-reachable subgraph to $y_j$, let $A_H$ denote the compartmental matrix for the restriction $\mathcal{M}_H$, and let $\partial I$ be the the product of the differential operator $d/dt$ and the $|V_G| \times |V_G|$ identity matrix.  
Then the following is an input-output equation for $\mathcal M$ involving $y_j$:
 \begin{align}  \label{eq:io-det}
 	\det (\partial I -{A}_H) y_j ~=~   \sum_{i \in In \cap V_H} (-1)^{i+j} \det \left( \partial I-{A}_H \right)^{i,j} u_i ~,
	 \end{align}
where $ \left( \partial I-{A}_H \right)^{i,j}$ denotes the matrix obtained from
 $\left( \partial I-{A}_H \right)$ by removing the row corresponding to compartment-$i$ and the column corresponding to compartment-$j$. 
%Thus, this input-output equation~\eqref{eq:io-det}
%involves only the output-reachable subgraph to $y_j$.%{\color{red}Is this last sentence needed?}
\end{thm}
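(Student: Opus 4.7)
The plan is to derive the identity from Cramer's rule applied to the full system and then reduce it to the output-reachable subgraph via a block factorization. First, I would rewrite the state equation $\dot x = Ax + u$ as the operator identity $(\partial I - A)x = u$, where $\partial = d/dt$. Since every $a_{ij}$ is a scalar, the entries of $\partial I - A$ lie in the commutative ring $\mathbb{R}[\partial]$, so classical Cramer's rule applies. Expanding the Cramer numerator for $x_j$ along column $j$, using $u_i \equiv 0$ for $i \notin In$, and setting $y_j = x_j$, one obtains
\[
\det(\partial I - A)\, y_j ~=~ \sum_{i \in In} (-1)^{i+j} \det\bigl((\partial I - A)^{i,j}\bigr)\, u_i.
\]
It remains to replace $A$ by $A_H$ throughout and to restrict the sum to $i \in In \cap V_H$.

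For this reduction I would invoke the defining property of $V_H$: no edge $k \to m$ can satisfy $k \notin V_H$ and $m \in V_H$, because concatenating such an edge with any directed path from $m$ to $j$ (which exists because $m \in V_H$) would place $k$ in $V_H$ as well. After reordering compartments so that those in $V_H$ come first, the matrix $\partial I - A$ is therefore block lower triangular, with diagonal blocks $\partial I - A_H$ and $\partial I - A_{V\setminus V_H}$. This immediately yields $\det(\partial I - A) = \det(\partial I - A_H)\cdot\det(\partial I - A_{V\setminus V_H})$, and the same block structure persists after removing row $i$ and column $j$ with $i,j \in V_H$, giving $\det((\partial I - A)^{i,j}) = \det((\partial I - A_H)^{i,j})\cdot\det(\partial I - A_{V\setminus V_H})$. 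The common factor is nonzero as a differential operator, since its leading term in $\partial$ is $\partial^{|V\setminus V_H|}$, and so it may be cancelled.

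It remains to handle the terms with $i \in In$ but $i \notin V_H$. I would show that the corresponding minor vanishes: after the reordering, removing row $i$ (in the lower block) and column $j$ (in the upper block) leaves a matrix whose upper right block is a $|V_H| \times |V\setminus V_H|$ zero block, while the upper left block has only $|V_H|-1$ columns. A Laplace expansion along the top $|V_H|$ rows must then vanish term by term, since no $|V_H|$-subset of the remaining columns can avoid the zero block. Combined with the factorization above, this produces the claimed input-output equation. The main obstacle is the bookkeeping of signs: one must confirm that the simultaneous row/column reordering is a similarity transformation (so the determinant is preserved) and that the sign $(-1)^{i+j}$ is indexed by compartment labels rather than by position, so that no phantom sign appears when passing from the full graph to $H$.
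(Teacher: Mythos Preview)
The paper does not supply its own proof of this statement; it is quoted as Theorem~3.8 of the reference \cite{linear-i-o}, so there is no in-paper argument to compare against. Your Cramer's-rule-over-$\mathbb{R}[\partial]$ approach followed by block-triangular reduction to the output-reachable subgraph is the standard argument and is correct in outline.

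One point deserves to be made explicit. Your claim that the upper-left diagonal block of $\partial I - A$ equals $\partial I - A_H$ is not automatic: for $m\in V_H$ the diagonal entry $A_{mm}$ still carries the terms $-a_{km}$ coming from edges $m\to k$ with $k\notin V_H$, and these are not edges of $H$. The blocks agree only under the convention that such outgoing edges are absorbed as leaks in the restricted model $\mathcal{M}_H$; this is indeed the convention in \cite{linear-i-o}, but it is not spelled out in the present paper and your argument silently relies on it. Your own caveat about the sign $(-1)^{i+j}$ is also well placed: after reordering, the cofactor sign is determined by the \emph{positions} of $i$ and $j$ within the $V_H$-block, not by their original labels, so the formula as stated presumes a compatible labeling of $V_H$ (which is harmless in this paper since every model considered is output-connectable and $H=G$).
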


{  \begin{rmk} In this work, we will only be considering output connectable models, thus we will be using Theorem \ref{thm:ioscc} heavily to obtain an explicit formula for the input-output equations in terms of the output-reachable subgraph.
\end{rmk}}

\begin{ex} [Continuation of Example \ref{ex:continuing}]\label{ex:ioeq}
 The model $\cm=(G,\{1\},\{2\},\{1\})$ with $G$ given in Figure \ref{fig:nonpermute} 
 has the following input-output equation: 

 {  
{\small
\begin{align*} 
\det
\begin{pmatrix}
\partial + a_{01} + a_{21} & - a_{12} \\
-a_{21} & \partial + a_{12}
\end{pmatrix}
y_2
=
(-1)^3
\det
\begin{pmatrix}
-a_{21}
\end{pmatrix}~
u_1,
\end{align*}
}
which reduces to:
}
\begin{align*} 
	\ddot{y}_2+(a_{01}+a_{21}+a_{12})\dot{y}_2+(a_{01}a_{12})y_2 = a_{21}u_1
\end{align*}

\end{ex}

\begin{rmk} We note that the input-output equation has $y_j$ and derivatives of $y_j$ on the left-hand side and $u_i$ and  derivatives of $u_i$ on the right-hand side.  For brevity we collectively refer to them as derivatives of $y_j$ and $u_i$.  We also note there are polynomial functions of parameters as coefficients of these derivatives, appearing on both sides.  Therefore, in the rest of the paper, we will refer to the \textit{right-hand side coefficients} and the \textit{left-hand side coefficients} as the polynomial functions of parameters that appear in front of the derivatives of $u_i$ and $y_j$, respectively. 
\end{rmk}

\subsection{Identifiability}

We will see shortly that sometimes the question of identifiability can help answer questions about indistinguishability.  We give some definitions of identifiability from \cite{MeshkatSullivantEisenberg}. {  Recent work includes \cite{hong2020global, ovchinnikov2021computing, renardy2022structural}.}

\begin{defn}\label{defn:identify}
Let $(G, In, Out, Leak)$ be a linear compartment model and
let $c$ denote the vector of all nonzero and nonmonic coefficient functions of
all the linear input-output equations derived in Theorem
\ref{thm:ioscc} for each $i \in Out$.  The function $c$ defines a map
$c:  \rr^{|E| + |Leak|}  \rightarrow \rr^{k}$,
where $k$ is the total number of coefficients which we call the \textit{coefficient map}.  The linear compartment model  $(G, In, Out, Leak)$ is: 
\begin{itemize}
	\item{\textit{globally identifiable} if $c$ is a one-to-one function, and is \textit{generically globally identifiable} if global identifiability holds everywhere in $\rr^{|E| + |Leak|}$, except possibly on a set of measure zero.}
	\item{\textit{locally identifiable} if around any neighborhood of a point in $\rr^{|E| + |Leak|}$, $c$ is a one-to-one function, and is \textit{generically locally identifiable} if local identifiability holds everywhere in $\rr^{|E| + |Leak|}$, except possibly on a set of measure zero.}
\item{\textit{unidentifiable} if $c$ is infinite-to-one.}
\end{itemize}
\end{defn}

\begin{ex} [Continuation of Example \ref{ex:continuing}]\label{ex:ident}
 The model $\cm=(G,\{1\},\{2\},\{1\})$ with $G$ given in Figure \ref{fig:nonpermute} 
 has the following coefficient map: 
\begin{align*} 
	c(a_{12}, a_{21}, a_{01})= (a_{01}+a_{21}+a_{12}, a_{01}a_{12}, a_{21})
\end{align*}

We can test injectivity of this map by solving the system of polynomial equations:
\begin{align*} 
	a_{01}+a_{21}+a_{12} = a^*_{01}+a^*_{21}+a^*_{12} \\ a_{01}a_{12} = a^*_{01}a^*_{12} \\
 a_{21}=a^*_{21}
\end{align*}

We get two solutions:
\begin{align*} 
	(a_{12}, a_{21}, a_{01})=(a^*_{12}, a^*_{21}, a^*_{01}) \text{ or} \\
 (a_{12}, a_{21}, a_{01})=(a^*_{01}, a^*_{21}, a^*_{12})
\end{align*}

Thus the model is locally identifiable.
\end{ex}

\section{Indistinguishability} \label{section:indist}

Recall two state space models are \emph{indistinguishable} if for any
choice of parameters in the first model, there is a choice of parameters
in the second model that will yield the same dynamics in both models, and
vice versa.  There have been several definitions and approaches to solve this problem in the literature \cite{godfrey-chapman,raksanyi,walter1984,zhang1991}.

To start with, to be indistinguishable, two models must have the same
input and output variables.  Since indistinguishable models give the same
dynamics, the structures of their input-output equations should be the same.
In the case that there is one output variable in both models, there is a single
input-output equation.  To say the input-output equations  have the same structure means that
exactly the same differential monomials (i.e. $y_i$, $\dot{y}_i$, ...,$u_i$, $\dot{u}_i$, ..., etc) appear in both input-output equations.

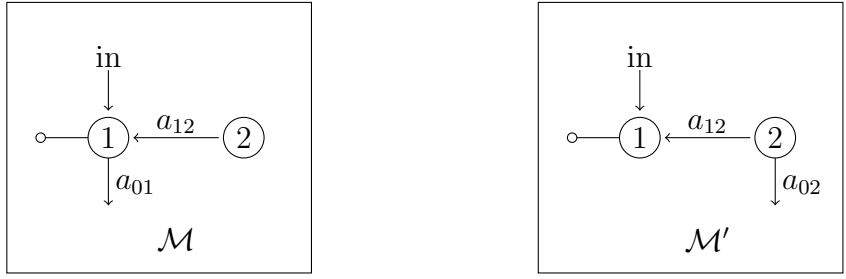
\begin{figure} %Example ...
\begin{center}

\begin{tikzpicture}[scale=.9]
%-----------------------------
% M/G
%-----------------------------
 	\draw (0,0) circle (0.3);
 	\draw (2,0) circle (0.3);
% 	\draw (4,0) circle (0.3);
% LABEL: numbers
    	\node[] at (0, 0) {1};
    	\node[] at (2, 0) {2};
%    	\node[] at (4, 0) {$3$};
% ARROWS
	 \draw[<-] (0.37, 0) -- (1.63, 0);
%	 \draw[->] (2.35,.1) -- (3.65,.1);
%	 \draw[<-] (2.35,-.1) -- (3.65,-.1);
	 %\draw[<-] (4.35,-.1) -- (5.65,-.1);
% RATES
   	 \node[] at (1, 0.2) {$a_{12}$};
%	\node[] at (3,0.3) {$a_{32}$};
%	\node[] at (3,-.3) {$a_{23}$};
	%\node[] at (5,-.3) {$a_{34}$};
%OUTPUT 
	\draw (-1,0) circle (0.07);	
	 \draw[-] ( -.3,0 ) -- ( -.93,0);
% Input 
	 \draw[->] (0, 1) -- (0, .4);	
   	 \node[] at (0,1.2) {in};
% Leaks
	 \draw[->] (0,-.3) -- (0,-1);	
   	 \node[] at (.4, -.7) {$a_{01}$};
%	 \draw[->] (0,-.3) -- (0, -.9);	
%   	 \node[] at (0.35, -.7) {$a_{01}$};
	 %\draw[->] (4,.3) -- (4, .8);	
   	 %\node[] at (4.3, .5) {$a_{03}$};
	 %\draw[->] (6,.3) -- (6, .8);	
   	 %\node[] at (6.3, .5) {$a_{04}$};
 % BOX
\draw (-1.5,-2) rectangle (3, 2);
% LABEL: Model name
    	\node[] at (1, -1.5) {$\cm$};
    	
%-----------------------------

\end{tikzpicture}\hspace*{3cm}\begin{tikzpicture}[scale=.9]
%-----------------------------
% M/G
%-----------------------------
 	\draw (0,0) circle (0.3);
 	\draw (2,0) circle (0.3);
% 	\draw (4,0) circle (0.3);
% LABEL: numbers
    	\node[] at (0, 0) {1};
    	\node[] at (2, 0) {2};
%    	\node[] at (4, 0) {$3$};
% ARROWS
	 \draw[<-] (0.37, 0) -- (1.63, 0);
%	 \draw[->] (2.35,.1) -- (3.65,.1);
%	 \draw[<-] (2.35,-.1) -- (3.65,-.1);
	 %\draw[<-] (4.35,-.1) -- (5.65,-.1);
% RATES
   	 \node[] at (1, 0.2) {$a_{12}$};
%	\node[] at (3,0.3) {$a_{32}$};
%	\node[] at (3,-.3) {$a_{23}$};
	%\node[] at (5,-.3) {$a_{34}$};
%OUTPUT 
	\draw (-1,0) circle (0.07);	
	 \draw[-] ( -.3,0 ) -- ( -.93,0);
% Input 
	 \draw[->] (0, 1) -- (0, .4);	
   	 \node[] at (0,1.2) {in};
% Leaks
	 \draw[->] (2,-.3) -- (2,-1);	
   	 \node[] at (2.4, -.7) {$a_{02}$};
%	 \draw[->] (0,-.3) -- (0, -.9);	
%   	 \node[] at (0.35, -.7) {$a_{01}$};
	 %\draw[->] (4,.3) -- (4, .8);	
   	 %\node[] at (4.3, .5) {$a_{03}$};
	 %\draw[->] (6,.3) -- (6, .8);	
   	 %\node[] at (6.3, .5) {$a_{04}$};
 % BOX

\draw (-1.5,-2) rectangle (3, 2);
% LABEL: Model name
    	\node[] at (1, -1.5) {$\cm'$};
    	
%-----------------------------

\end{tikzpicture}
\end{center}

\caption{Two models described in Example \ref{ex:small2}}
\label{fig:small2}
\end{figure}
\begin{ex}\label{ex:small2} [\textbf{Distinguishable via input-output equation structure}]
Consider the models given in Figure \ref{fig:small2}. 
The model $\cm$ has the form:
   \begin{align*}
       \dot{x}_1 = -a_{01} x_1 + a_{12} x_2 + u_1\\
       \dot{x}_2 = -a_{12}x_2 \\
       y_1 = x_1
   \end{align*} 
   while the model $\cm'$ has the form:
   \begin{align*}
       \dot{x}_1 = + a_{12} x_2 + u_1\\
       \dot{x}_2 = -(a_{02} + a_{12})x_2 \\
       y_1 = x_1
   \end{align*}

   The input-output equations for $\cm$ and $\cm'$ are, respectively:
   \begin{align*}
       \ddot{y}_1 + (a_{01} + a_{12}) \dot{y}_1 + a_{01}a_{12} y_1 = \dot{u}_1 + a_{12} u_1  \\
       \ddot{y}_1 + (a_{02} + a_{12}) \dot{y}_1 = \dot{u}_1 + (a_{02} + a_{12}) u_1 
   \end{align*}

   Since these two models do not have the same structure of the input-output equations, e.g. $\cm$ has a term in $y_1$ while $\cm'$ does not, then these two models are \textit{distinguishable}. 
\end{ex}

Supposing that the two models have the same structures as described above,
we can let $c(p)$ and $c'(p')$ denote the corresponding coefficient maps of
the two models, respectively.  Here $c : \Theta \rightarrow \rr^m$ and 
$c':  \Theta' \rightarrow \rr^m$, and the components are ordered so that the
components correspond to each other as coming from the same differential
monomial.   Note that the dimensions of the parameter spaces $\Theta$ and $\Theta'$
might be different. We further assume that both coefficient maps are monic on the same coefficient. Indistinguishability is characterized in terms of
the coefficient maps $c$ and $c'$.

\begin{defn}
Suppose that Model 1 and Model 2 have the same input-output equation structure.
Let $c:  \Theta \rightarrow \rr^{m}$ and $c':  \Theta^{'} \rightarrow \rr^{m}$ 
be the coefficient maps for Model 1 and Model 2, respectively.  We say that:
\begin{itemize}
\item  Model 1 and Model 2 are \emph{indistinguishable} if
for all  $p' \in \Theta'$, there exists at least one  
$p\in \Theta$ such that $c(p)=c'(p')$, and vice versa;
%\item  Model 1 and Model 2 are \emph{generically indistinguishable} if,
%for almost all $p' \in \Theta'$, there exists at least one  
%$p\in \Theta$ such that $c(p)=c'(p')$, and vice versa;
\item  Model 1 and Model 2 are \emph{distinguishable}
if they are not indistinguishable.
\end{itemize}
\end{defn}

\begin{rmk}
The definition of indistinguishability is equivalent to saying that
$c(\Theta) =  c'(\Theta')$, i.e. the images of the coefficient maps are the same.  %The definition of generic indistinguishability
%is equivalent to saying that the symmetric difference of $c(\Theta) \triangle c'(\Theta')$
%is a set of measure zero.  The definition of generic distinguishability
%is equivalent to the existence of an open subset $U \subseteq \Theta$ such that
%for all $p \in U$, there is no $p' \in \Theta'$ such that $c(p) = %c'(p')$ or the
%symmetric condition for $\Theta'$. 
\end{rmk}

A simple observation on distinguishability is that indistinguishable models
must have the same vanishing ideal on the image of the parametrization \cite{meshkat-rosen-sullivant}.  This means that the coefficients must satisfy the same algebraic dependency relationships.
This is usually easy to check in small to medium-sized examples.

\begin{figure} %Example ...
\begin{center}

\begin{tikzpicture}[scale=.8]
%-----------------------------
% M/G
%-----------------------------
 	\draw (0,0) circle (0.3);
 	\draw (2,0) circle (0.3);
 	\draw (4,0) circle (0.3);
% LABEL: numbers
    	\node[] at (0, 0) {1};
    	\node[] at (2, 0) {2};
    	\node[] at (4, 0) {$3$};
% ARROWS
	 \draw[->] (0.35, .1) -- (1.65, .1);
	 \draw[->] (2.35,.1) -- (3.65,.1);
	 %\draw[<-] (2.35,-.1) -- (3.65,-.1);
	 %\draw[<-] (4.35,-.1) -- (5.65,-.1);
% RATES
   	 \node[] at (1, 0.3) {$a_{21}$};
	\node[] at (3,0.3) {$a_{32}$};
	%\node[] at (3,-.3) {$a_{23}$};
	%\node[] at (5,-.3) {$a_{34}$};
%OUTPUT 
	\draw (4.69,.69) circle (0.07);	
	 \draw[-] (4.65, .65 ) -- (4.22, .22);
% Input 
	 \draw[->] (-.65, .65) -- (-.25, .25);	
   	 \node[] at (-.87,.87) {in};
	 \draw[->] (2, .85) -- (2, .35);	
   	 \node[] at (2,1.1) {in};

% Leaks
	% \draw[->] (0,-.3) -- (0, -.8);	
   	 %\node[] at (.35, -.5) {$a_{01}$};
	 \draw[->] (0,-.3) -- (0, -.9);	
   	 \node[] at (0.4, -.7) {$a_{01}$};
	 %\draw[->] (4,.3) -- (4, .8);	
   	 %\node[] at (4.3, .5) {$a_{03}$};
	 %\draw[->] (6,.3) -- (6, .8);	
   	 %\node[] at (6.3, .5) {$a_{04}$};
 % BOX
\draw (-1.4,-2) rectangle (5, 2);
% LABEL: Model name
    	\node[] at (2, -1.5) {$\cm$};
    	
%-----------------------------

\end{tikzpicture}
\begin{tikzpicture}[scale=.8]
%-----------------------------
% M/G
%-----------------------------
 	\draw (0,0) circle (0.3);
 	\draw (2,0) circle (0.3);
 	\draw (4,0) circle (0.3);
% LABEL: numbers
    	\node[] at (0, 0) {1};
    	\node[] at (2, 0) {2};
    	\node[] at (4, 0) {$3$};
% ARROWS
	 \draw[->] (0.35, .1) -- (1.65, .1);
	 \draw[->] (2.35,.1) -- (3.65,.1);
	 %\draw[<-] (2.35,-.1) -- (3.65,-.1);
	 %\draw[<-] (4.35,-.1) -- (5.65,-.1);
% RATES
   	 \node[] at (1, 0.3) {$a_{21}$};
	\node[] at (3,0.3) {$a_{32}$};
	%\node[] at (3,-.3) {$a_{23}$};
	%\node[] at (5,-.3) {$a_{34}$};
%OUTPUT 
	\draw (4.69,.69) circle (0.07);	
	 \draw[-] (4.65, .65 ) -- (4.22, .22);
% Input 
	 \draw[->] (-.65, .65) -- (-.25, .25);	
   	 \node[] at (-.87,.87) {in};
	 \draw[->] (2, .85) -- (2, .35);	
   	 \node[] at (2,1.1) {in};

% Leaks
	% \draw[->] (0,-.3) -- (0, -.8);	
   	 %\node[] at (.35, -.5) {$a_{01}$};
	 \draw[->] (2,-.3) -- (2, -.9);	
   	 \node[] at (2.4, -.7) {$a_{02}$};
	 %\draw[->] (4,.3) -- (4, .8);	
   	 %\node[] at (4.3, .5) {$a_{03}$};
	 %\draw[->] (6,.3) -- (6, .8);	
   	 %\node[] at (6.3, .5) {$a_{04}$};
 % BOX
\draw (-1.4,-2) rectangle (5, 2);
% LABEL: Model name
    	\node[] at (2, -1.5) {$\cm'$};
    	
%-----------------------------

\end{tikzpicture}

\caption{Indistinugishable models described in Example \ref{ex:twoinputdist}.}
\label{fig:twoinputdist}
\end{center}
\end{figure}
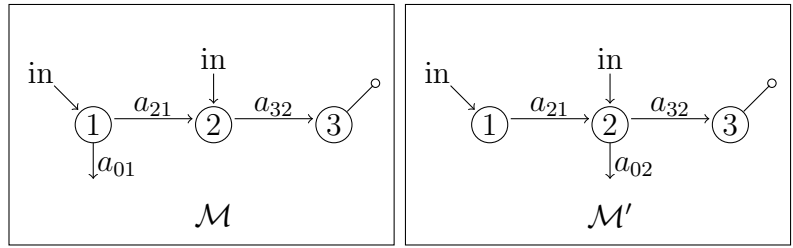

\begin{ex}\label{ex:twoinputdist} [\textbf{Distinguishable via coefficient dependency relationships}]
    Consider the following models in Figure \ref{fig:twoinputdist}. The model $\cm$ has the form:
    \begin{align*}
        \dot{x}_1 = -(a_{01}+a_{21})x_1 + u_1 \\
        \dot{x}_2 = a_{21}x_1 -a_{32}x_2 + u_2 \\
        \dot{x}_3 = a_{32}x_2 \\
        y_3 = x_3
    \end{align*}
while the model $\cm'$ has the form:
\begin{align*}
        \dot{x}_1 = -a_{21}x_1 + u_1 \\
        \dot{x}_2 = a_{21}x_1 -(a_{02} + a_{32})x_2 + u_2 \\
        \dot{x}_3 = a_{32}x_2 \\
        y_3 = x_3
    \end{align*}
    
The input-output equations for $\cm$ and $\cm'$ are, respectively:
\begin{align*}
    \dddot{y}_3 + \underbrace{(a_{32} + a_{01} + a_{21})}_{ c_1}\ddot{y}_3 + \underbrace{(a_{01}a_{32} + a_{21}a_{32})}_{ c_2}\dot{y}_3 = \underbrace{a_{21}a_{32}}_{ c_3}u_1 + \underbrace{a_{32}}_{ c_4}\dot{u}_2 + \underbrace{(a_{01}a_{32} + a_{21}a_{32})}_{ c_5} u_2 \\
    \dddot{y}_3 + \underbrace{(a_{32} + a_{02} + a_{21})}_{ c_1'}\ddot{y}_3 + \underbrace{(a_{02}a_{21} + a_{21}a_{32})}_{ c_2'}\dot{y}_3 = \underbrace{a_{21}a_{32}}_{ c_3'}u_1 + \underbrace{a_{32}}_{ c_4'}\dot{u}_2 + \underbrace{a_{21}a_{32}}_{ c_5'} u_2
\end{align*}

If we label the coefficients $c_1,...,c_5$ for $\cm$ and $c'_1,...,c'_5$ for $\cm'$, then the algebraic dependency relationships among the coefficients for $\cm$ and $\cm'$ are the following, respectively:
\begin{align*}
    c_2 - c_5 = 0, c_1c_4 - c_4^2 -c_5 = 0 \\
    c'_3 - c'_5 = 0, c'_2{c'_4}^2 -c'_1c'_4c'_5 + {c'_5}^2 =0
\end{align*}
%{\color{red}should we distinguish between the c's above?  maybe call some c'}

Since the algebraic dependency relationships among the coefficients are different, the models are distinguishable.
    
\end{ex}

Once the same vanishing ideal has been established, 
an approach for checking indistinguishability is
to construct the equation system $c(p) = c'(p')$ and attempt to
``solve'' for one set of parameters in terms of the other, and vice versa, using
Gr\"obner basis calculations \cite{meshkat-rosen-sullivant}.  Once this has been done, one must check the
resulting solutions to determine if they satisfy the necessary inequality
constraints of the parameter spaces $\Theta$ and $\Theta'$.  We note that identifiable models with coefficient maps satisfying the same algebraic dependence relationships can always be solved for one set of parameters in terms of the other, and vice versa, {  since the coefficient maps are injective maps to the same image, and thus invertible over this restriction, }but the parameter constraints (e.g.~positive parameters) must still be checked for indistinguishability to hold \cite{meshkat-rosen-sullivant}.

While the method above gives an approach to test indistinguishability, we note that this approach involves symbolic computation, which can be computationally expensive for medium to large-sized examples.  Thus, in this work, we seek out a different approach to test indistinguishability.  The motivation comes from the next example, where we consider indistinguishability of linear compartmental models via a permutation of parameters.  Thus, rather than testing that the images of the coefficient maps are the same using Gr\"obner basis calculations, we can simply show that the coefficient maps are identical up to a renaming of parameters:

\begin{figure} %Example ...
\begin{center}

\begin{tikzpicture}[scale=.8]
%-----------------------------
% M/G
%-----------------------------
 	\draw (0,0) circle (0.3);
 	\draw (2,0) circle (0.3);
 	\draw (4,0) circle (0.3);
% LABEL: numbers
    	\node[] at (0, 0) {1};
    	\node[] at (2, 0) {2};
    	\node[] at (4, 0) {$3$};
% ARROWS
	 \draw[->] (0.35, .1) -- (1.65, .1);
	 \draw[->] (2.35,.1) -- (3.65,.1);
	 %\draw[<-] (2.35,-.1) -- (3.65,-.1);
	 %\draw[<-] (4.35,-.1) -- (5.65,-.1);
% RATES
   	 \node[] at (1, 0.3) {$a_{21}$};
	\node[] at (3,0.3) {$a_{32}$};
	%\node[] at (3,-.3) {$a_{23}$};
	%\node[] at (5,-.3) {$a_{34}$};
%OUTPUT 
	\draw (4.69,.69) circle (0.07);	
	 \draw[-] (4.65, .65 ) -- (4.22, .22);
% Input 
	 \draw[->] (-.65, .65) -- (-.25, .25);	
   	 \node[] at (-.8,.8) {in};
% Leaks
	% \draw[->] (0,-.3) -- (0, -.8);	
   	 %\node[] at (.35, -.5) {$a_{01}$};
	 \draw[->] (0,-.3) -- (0, -.9);	
   	 \node[] at (0.35, -.7) {$a_{01}$};
	 %\draw[->] (4,.3) -- (4, .8);	
   	 %\node[] at (4.3, .5) {$a_{03}$};
	 %\draw[->] (6,.3) -- (6, .8);	
   	 %\node[] at (6.3, .5) {$a_{04}$};
 % BOX
\draw (-1.4,-2) rectangle (5, 2);
% LABEL: Model name
    	\node[] at (2, -1.5) {$\cm$};
    	
%-----------------------------

\end{tikzpicture}
\begin{tikzpicture}[scale=.8]
%-----------------------------
% M/G
%-----------------------------
 	\draw (0,0) circle (0.3);
 	\draw (2,0) circle (0.3);
 	\draw (4,0) circle (0.3);
% LABEL: numbers
    	\node[] at (0, 0) {1};
    	\node[] at (2, 0) {2};
    	\node[] at (4, 0) {$3$};
% ARROWS
	 \draw[->] (0.35, .1) -- (1.65, .1);
	 \draw[->] (2.35,.1) -- (3.65,.1);
	 %\draw[<-] (2.35,-.1) -- (3.65,-.1);
	 %\draw[<-] (4.35,-.1) -- (5.65,-.1);
% RATES
   	 \node[] at (1, 0.3) {$a_{21}$};
	\node[] at (3,0.3) {$a_{32}$};
	%\node[] at (3,-.3) {$a_{23}$};
	%\node[] at (5,-.3) {$a_{34}$};
%OUTPUT 
	\draw (4.69,.69) circle (0.07);	
	 \draw[-] (4.65, .65 ) -- (4.22, .22);
% Input 
	 \draw[->] (-.65, .65) -- (-.25, .25);	
   	 \node[] at (-.8,.8) {in};
% Leaks
	% \draw[->] (0,-.3) -- (0, -.8);	
   	 %\node[] at (.35, -.5) {$a_{01}$};
	 \draw[->] (2,-.3) -- (2, -.9);	
   	 \node[] at (2.35, -.7) {$a_{02}$};
	 %\draw[->] (4,.3) -- (4, .8);	
   	 %\node[] at (4.3, .5) {$a_{03}$};
	 %\draw[->] (6,.3) -- (6, .8);	
   	 %\node[] at (6.3, .5) {$a_{04}$};
 % BOX
\draw (-1.4,-2) rectangle (5, 2);
% LABEL: Model name
    	\node[] at (2, -1.5) {$\cm'$};
    	
%-----------------------------

\end{tikzpicture}
\begin{tikzpicture}[scale=.8]
%-----------------------------
% M/G
%-----------------------------
 	\draw (0,0) circle (0.3);
 	\draw (2,0) circle (0.3);
 	\draw (4,0) circle (0.3);
% LABEL: numbers
    	\node[] at (0, 0) {1};
    	\node[] at (2, 0) {2};
    	\node[] at (4, 0) {$3$};
% ARROWS
	 \draw[->] (0.35, .1) -- (1.65, .1);
	 \draw[->] (2.35,.1) -- (3.65,.1);
	 \draw[<-] (2.35,-.1) -- (3.65,-.1);
	 %\draw[<-] (4.35,-.1) -- (5.65,-.1);
% RATES
   	 \node[] at (1, 0.3) {$a_{21}$};
	\node[] at (3,0.3) {$a_{32}$};
	\node[] at (3,-.3) {$a_{23}$};
	%\node[] at (5,-.3) {$a_{34}$};
%OUTPUT 
	\draw (4.69,.69) circle (0.07);	
	 \draw[-] (4.65, .65 ) -- (4.22, .22);
% Input 
	 \draw[->] (-.65, .65) -- (-.25, .25);	
   	 \node[] at (-.8,.8) {in};
% Leaks
	% \draw[->] (0,-.3) -- (0, -.8);	
   	 %\node[] at (.35, -.5) {$a_{01}$};
%	 \draw[->] (0,-.3) -- (0, -.9);	
%   	 \node[] at (0.35, -.7) {$a_{01}$};
	 %\draw[->] (4,.3) -- (4, .8);	
   	 %\node[] at (4.3, .5) {$a_{03}$};
	 %\draw[->] (6,.3) -- (6, .8);	
   	 %\node[] at (6.3, .5) {$a_{04}$};
 % BOX
\draw (-1.4,-2) rectangle (5, 2);
% LABEL: Model name
    	\node[] at (2, -1.5) {$\cm''$};
    	
%-----------------------------

\end{tikzpicture}

\end{center}

\caption{Three permutation indistinguishable models described in Example \ref{ex:permute}}
\label{fig:permute}
\end{figure}
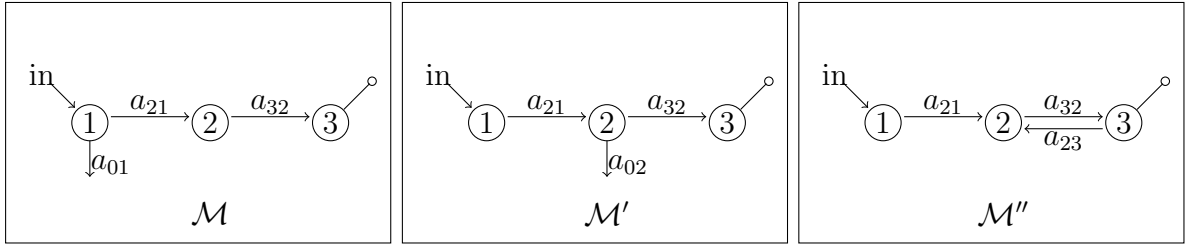
\begin{ex} [\textbf{Indistinguishable via permutation of parameters}] \label{ex:permute}  Consider the models in Figure \ref{fig:permute}.

The model $\cm$ has the form:
    \begin{align*}
        \dot{x}_1 = -(a_{01}+a_{21})x_1 + u_1 \\
        \dot{x}_2 = a_{21}x_1 -a_{32}x_2  \\
        \dot{x}_3 = a_{32}x_2 \\
        y_3 = x_3
    \end{align*}
while the model $\cm'$ has the form:
\begin{align*}
        \dot{x}_1 = -a_{21}x_1 + u_1 \\
        \dot{x}_2 = a_{21}x_1 -(a_{02} + a_{32})x_2  \\
        \dot{x}_3 = a_{32}x_2 \\
        y_3 = x_3
    \end{align*}
and the model $\cm''$ has the form:
\begin{align*}
        \dot{x}_1 = -a_{21}x_1 + u_1 \\
        \dot{x}_2 = a_{21}x_1 -a_{32}x_2 + a_{23}x_3 \\
        \dot{x}_3 = a_{32}x_2 -a_{23}x_3 \\
        y_3 = x_3
    \end{align*}

The input-output equations are of the form:
\begin{align*}
    \cm: &\quad  \dddot{y}_3+(a_{32}+a_{01}+a_{21})\ddot{y}_3+(a_{01}a_{32}+a_{21}a_{32})\dot{y}_3 = (a_{21}a_{32})u_1\\
    \cm': & \quad \dddot{y}_3+(a_{21}+a_{02}+a_{32})\ddot{y}_3+(a_{02}a_{21}+a_{32}a_{21})\dot{y}_3 = (a_{32}a_{21})u_1\\
    \cm'': & \quad \dddot{y}_3+(a_{21}+a_{23}+a_{32})\ddot{y}_3+(a_{23}a_{21}+a_{32}a_{21})\dot{y}_3 = (a_{32}a_{21})u_1
\end{align*}

The three coefficient maps are exactly the same up to a renaming of parameters, i.e., we can rename the parameters of $\cm$ (first column), $\cm'$ (second column), $\cm''$ (third column), respectively: %{\color{red}should we put arrows or something instead of equal signs?}
\begin{align*}
a_{32} \equiv a_{21} \equiv a_{21}\\
a_{01} \equiv a_{02} \equiv a_{23}\\
a_{21} \equiv a_{32} \equiv a_{32}
\end{align*}

Thus we can see these models are indistinguishable, via this renaming of parameters.
\end{ex} 

From Example \ref{ex:permute}, it appears that one way we can go from model to model is by moving the leak.  However, the situation is a little bit more nuanced as the next example shows:

\begin{figure}
\begin{tikzpicture}[scale=.8]
%-----------------------------
% M/G
%-----------------------------
 	\draw (0,0) circle (0.3);
 	\draw (2,0) circle (0.3);
 	\draw (4,0) circle (0.3);
% LABEL: numbers
    	\node[] at (0, 0) {1};
    	\node[] at (2, 0) {2};
    	\node[] at (4, 0) {$3$};
% ARROWS
	 \draw[->] (0.35, .1) -- (1.65, .1);
	 \draw[->] (2.35,.1) -- (3.65,.1);
	 %\draw[<-] (2.35,-.1) -- (3.65,-.1);
	 %\draw[<-] (4.35,-.1) -- (5.65,-.1);
% RATES
   	 \node[] at (1, 0.3) {$a_{21}$};
	\node[] at (3,0.3) {$a_{32}$};
	%\node[] at (3,-.3) {$a_{23}$};
	%\node[] at (5,-.3) {$a_{34}$};
%OUTPUT 
	\draw (4.69,.69) circle (0.07);	
	 \draw[-] (4.65, .65 ) -- (4.22, .22);
% Input 
	 \draw[->] (-.65, .65) -- (-.25, .25);	
   	 \node[] at (-.8,.8) {in};
% Leaks
	% \draw[->] (0,-.3) -- (0, -.8);	
   	 %\node[] at (.35, -.5) {$a_{01}$};
	 \draw[->] (4,-.3) -- (4, -.9);	
   	 \node[] at (4.35, -.7) {$a_{03}$};
	 %\draw[->] (4,.3) -- (4, .8);	
   	 %\node[] at (4.3, .5) {$a_{03}$};
	 %\draw[->] (6,.3) -- (6, .8);	
   	 %\node[] at (6.3, .5) {$a_{04}$};
 % BOX
\draw (-1.4,-2) rectangle (5, 2);
% LABEL: Model name
    	\node[] at (2, -1.5) {$\cm'''$};
    	
%-----------------------------

\end{tikzpicture}
\caption{A model similar to those in Example \ref{ex:permute}/Figure \ref{fig:permute}, but not indistinguishable as described in Example \ref{ex:movingleaknonindist}.}
\label{fig:movingleaknonindist}
\end{figure}
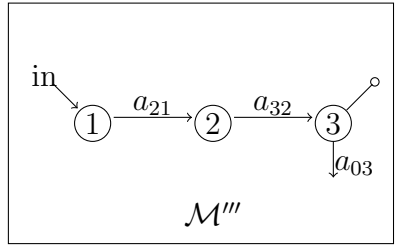

\begin{ex}\label{ex:movingleaknonindist} [\textbf{Moving leak does not always result in indistinguishability}]

Consider now the model $\cm'''$ in Figure \ref{fig:movingleaknonindist}, where we move a leak from compartment 2 in $\cm'$ in Figure \ref{fig:permute} 
to compartment 3 in $\cm'''$.  %The compartmental matrix is:

% \[
%A_4 = \begin{pmatrix}
%-a_{21} & 0 & 0  \\
%a_{21} & -a_{32} & 0  \\
%0 & a_{32} & -a_{03} 
%\end{pmatrix} 
%\]

The model $\cm'''$ has the form:
\begin{align*}
        \dot{x}_1 = -a_{21}x_1 + u_1 \\
        \dot{x}_2 = a_{21}x_1 -a_{32}x_2  \\
        \dot{x}_3 = a_{32}x_2 -a_{03}x_3 \\
        y_3 = x_3
    \end{align*}

Here the structure of the input-output equation is different from the models $\cm, \cm', \cm''$, as the input-output equation has a constant term in $y_3$:  
\begin{align*}
   \cm''': & \quad \dddot{y}_3+(a_{21}+a_{32}+a_{03})\ddot{y}_3+(a_{03}a_{32}+a_{32}a_{21})\dot{y}_3 + (a_{21}a_{32}a_{03})y_3 = (a_{32}a_{21})u_1\\
\end{align*}
Thus, $\cm'''$ is distinguishable from models $\cm, \cm', \cm''$.  

\end{ex}

While indistinguishability from a permutation of parameters is easy to check, we note that it is possible to have indistinguishable models that do not result from a permutation of parameters.  Consider the following example:

\begin{figure} 
\begin{center}

\begin{tikzpicture}[scale=.9]
%-----------------------------
% M/G
%-----------------------------
 	\draw (0,0) circle (0.3);
 	\draw (2,0) circle (0.3);
% 	\draw (4,0) circle (0.3);
% LABEL: numbers
    	\node[] at (0, 0) {1};
    	\node[] at (2, 0) {2};
%    	\node[] at (4, 0) {$3$};
% ARROWS
	 \draw[->] (0.35, .1) -- (1.65, .1);
	 \draw[<-] (0.35, -.1) -- (1.65, -.1);
	 
%	 \draw[->] (2.35,.1) -- (3.65,.1);
	 %\draw[<-] (2.35,-.1) -- (3.65,-.1);
	 %\draw[<-] (4.35,-.1) -- (5.65,-.1);
% RATES
   	 \node[] at (1, 0.35) {$a_{21}$};
   	 \node[] at (1,-.35) {$a_{12}$};
%	\node[] at (3,0.3) {$a_{32}$};
	%\node[] at (3,-.3) {$a_{23}$};
	%\node[] at (5,-.3) {$a_{34}$};
%OUTPUT 
	\draw (2.69,.69) circle (0.07);	
	 \draw[-] (2.65, .65 ) -- (2.22, .22);
% Input 
	 \draw[->] (-.65, .65) -- (-.25, .25);	
   	 \node[] at (-.8,.8) {in};
% Leaks
	% \draw[->] (0,-.3) -- (0, -.8);	
   	 %\node[] at (.35, -.5) {$a_{01}$};
	 \draw[->] (0,-.3) -- (0, -.9);	
   	 \node[] at (0.35, -.7) {$a_{01}$};
	 %\draw[->] (4,.3) -- (4, .8);	
   	 %\node[] at (4.3, .5) {$a_{03}$};
	 %\draw[->] (6,.3) -- (6, .8);	
   	 %\node[] at (6.3, .5) {$a_{04}$};
 % BOX
\draw (-1.4,-2) rectangle (3, 2);
% LABEL: Model name
    	\node[] at (1, -1.5) {$\cm$};
    	
%-----------------------------

\end{tikzpicture}\hspace*{1cm}
\begin{tikzpicture}[scale=.9]
%-----------------------------
% M/G
%-----------------------------
 	\draw (0,0) circle (0.3);
 	\draw (2,0) circle (0.3);
% 	\draw (4,0) circle (0.3);
% LABEL: numbers
    	\node[] at (0, 0) {1};
    	\node[] at (2, 0) {2};
%    	\node[] at (4, 0) {$3$};
% ARROWS
	 \draw[->] (0.35, .1) -- (1.65, .1);
	 \draw[<-] (0.35, -.1) -- (1.65, -.1);
	 
%	 \draw[->] (2.35,.1) -- (3.65,.1);
	 %\draw[<-] (2.35,-.1) -- (3.65,-.1);
	 %\draw[<-] (4.35,-.1) -- (5.65,-.1);
% RATES
   	 \node[] at (1, 0.35) {$a_{21}$};
   	 \node[] at (1,-.35) {$a_{12}$};
%	\node[] at (3,0.3) {$a_{32}$};
	%\node[] at (3,-.3) {$a_{23}$};
	%\node[] at (5,-.3) {$a_{34}$};
%OUTPUT 
	\draw (2.69,.69) circle (0.07);	
	 \draw[-] (2.65, .65 ) -- (2.22, .22);
% Input 
	 \draw[->] (-.65, .65) -- (-.25, .25);	
   	 \node[] at (-.8,.8) {in};
% Leaks
	% \draw[->] (0,-.3) -- (0, -.8);	
   	 %\node[] at (.35, -.5) {$a_{01}$};
	 \draw[->] (2,-.3) -- (2, -.9);	
   	 \node[] at (2.35, -.7) {$a_{02}$};
	 %\draw[->] (4,.3) -- (4, .8);	
   	 %\node[] at (4.3, .5) {$a_{03}$};
	 %\draw[->] (6,.3) -- (6, .8);	
   	 %\node[] at (6.3, .5) {$a_{04}$};
 % BOX
\draw (-1.4,-2) rectangle (3, 2);
% LABEL: Model name
    	\node[] at (1, -1.5) {$\cm'$};
    	
%-----------------------------

\end{tikzpicture}\hspace*{1cm}
\begin{tikzpicture}[scale=.9]
%-----------------------------
% M/G
%-----------------------------
 	\draw (0,0) circle (0.3);
 	\draw (2,0) circle (0.3);
% 	\draw (4,0) circle (0.3);
% LABEL: numbers
    	\node[] at (0, 0) {1};
    	\node[] at (2, 0) {2};
%    	\node[] at (4, 0) {$3$};
% ARROWS
	 \draw[->] (0.35, .1) -- (1.65, .1);
%	 \draw[<-] (0.35, -.1) -- (1.65, -.1);
	 
%	 \draw[->] (2.35,.1) -- (3.65,.1);
	 %\draw[<-] (2.35,-.1) -- (3.65,-.1);
	 %\draw[<-] (4.35,-.1) -- (5.65,-.1);
% RATES
   	 \node[] at (1, 0.35) {$a_{21}$};
%   	 \node[] at (1,-.35) {$a_{12}$};
%	\node[] at (3,0.3) {$a_{32}$};
	%\node[] at (3,-.3) {$a_{23}$};
	%\node[] at (5,-.3) {$a_{34}$};
%OUTPUT 
	\draw (2.69,.69) circle (0.07);	
	 \draw[-] (2.65, .65 ) -- (2.22, .22);
% Input 
	 \draw[->] (-.65, .65) -- (-.25, .25);	
   	 \node[] at (-.8,.8) {in};
% Leaks
	% \draw[->] (0,-.3) -- (0, -.8);	
   	 %\node[] at (.35, -.5) {$a_{01}$};
	 \draw[->] (0,-.3) -- (0, -.9);	
   	 \node[] at (0.35, -.7) {$a_{01}$};
   	 \draw[->] (2,-.3) -- (2, -.9);	
   	 \node[] at (2.35, -.7) {$a_{02}$};
	 %\draw[->] (4,.3) -- (4, .8);	
   	 %\node[] at (4.3, .5) {$a_{03}$};
	 %\draw[->] (6,.3) -- (6, .8);	
   	 %\node[] at (6.3, .5) {$a_{04}$};
 % BOX
\draw (-1.4,-2) rectangle (3, 2);
% LABEL: Model name
    	\node[] at (1, -1.5) {$\cm''$};
    	
%-----------------------------

\end{tikzpicture}

\end{center}
\caption{Three non-permutation indistinguishable models described in Example \ref{ex:nonpermute}}
\label{fig:nonpermute}
\end{figure}
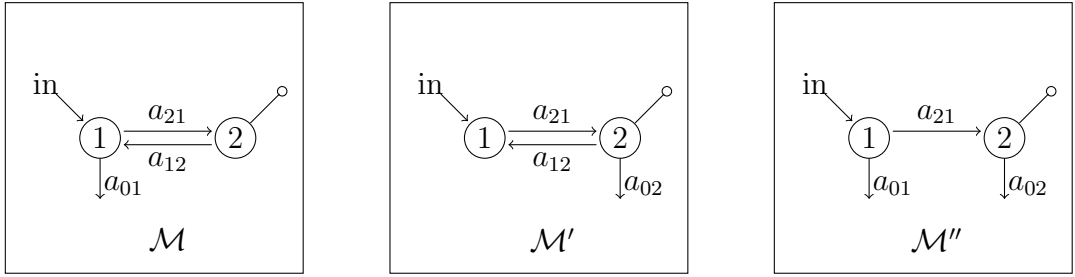
\begin{ex}\label{ex:nonpermute} [\textbf{Indistinguishable, but not from permutation of parameters}]
Consider the models in Figure \ref{fig:nonpermute}.

%    \[
%A = \begin{pmatrix}
%-a_{01}-a_{21} & a_{12} \\
%a_{21} & -a_{12}
%\end{pmatrix} \text{and }
%A' = \begin{pmatrix}
%-a_{21} & a_{12} \\
%a_{21} & -a_{02}-a_{12}
%\end{pmatrix} \text{and }
%A'' = \begin{pmatrix}
%-a_{01}-a_{21} & 0 \\
%a_{21} & -a_{02}
%\end{pmatrix}
%\]

The model $\cm$ has the form:
    \begin{align*}
        \dot{x}_1 = -(a_{01}+a_{21})x_1 + a_{12}x_2 + u_1 \\
        \dot{x}_2 = a_{21}x_1 { -a_{12}x_2}  \\
        y_2 = x_2
    \end{align*}
while the model $\cm'$ has the form:
\begin{align*}
        \dot{x}_1 = { -}a_{21}x_1 + a_{12}x_2 + u_1 \\
        \dot{x}_2 = a_{21}x_1 - (a_{02} + a_{12})x_2 \\
        y_2 = x_2
    \end{align*}
and the model $\cm''$ has the form:
\begin{align*}
        \dot{x}_1 = -(a_{01}+a_{21})x_1 + u_1 \\
        \dot{x}_2 = a_{21}x_1 -a_{02}x_2  \\
        y_2 = x_2
    \end{align*}

The input-output equations are of the form:
\begin{align*}
    \cm: &\quad  \ddot{y}_2+(a_{01}+a_{21}+a_{12})\dot{y}_2+(a_{01}a_{12})y_2 = a_{21}u_1\\
    \cm': &\quad  \ddot{y}_2+(a_{02}+a_{21}+a_{12})\dot{y}_2+(a_{02}a_{21})y_2 = a_{21}u_1\\
    \cm'': &\quad  \ddot{y}_2+(a_{01}+a_{21}+a_{02})\dot{y}_2+(a_{01}a_{02}+a_{21}a_{02})y_2 = a_{21}u_1\\
\end{align*}
We see that $a_{21}$ must play the same role in all three models.  

However, there is no renaming of parameters that will map one coefficient map to the next.  Nevertheless, these three models are indistinguishable.  We showed in Example \ref{ex:ident} that model $\cm$ is locally identifiable.  Likewise, it can easily be shown that $\cm'$ and $\cm''$ are also (at least) locally identifiable.  Since the coefficient maps are (at most) finite-to-one from 3 parameters to 3 coefficients, this means they are all surjective maps, thus their images are the same as they are all equal to $\rr^3$.   One must still check any constraints on the parameters, e.g.~we can assume the parameters must be positive.  Since the coefficients are all positive assuming positive parameters, then their images are the same as they are equal to $\rr_{>0}^3$, and the models are indistinguishable, but \textit{not} via a renaming of the parameters.  

Alternatively, instead of checking (local) injectivity and surjectivity, one could construct the equation system $c(p) = c'(p')$ and attempt to
``solve'' for one set of parameters in terms of the other, and vice versa, using
Gr\"obner basis calculations.  For example, letting $c(p)$ correspond to model $\cm'$ where $p=(a_{02},a_{21},a_{12})$and letting $c'(p')$ correspond to model $\cm''$ where $p'=(b_{01},b_{21},b_{02})$, we obtain $p$ in terms of $p'$:

$$a_{21}=b_{21}, a_{02}=\frac{b_{02}b_{01}+b_{02}b_{21}}{b_{21}}, a_{12}=\frac{-b_{01}b_{02}+b_{01}b_{21}}{b_{21}}$$

and likewise we can solve for $p'$ in terms of $p$:
\begin{multline*}
b_{01} = \frac{1}{2}(a_{02}+a_{12}-a_{21} \pm \sqrt{a_{02}^2+2a_{02}a_{12}+a_{12}^2-2a_{02}a_{21}+2a_{12}a_{21}+a_{21}^2}), \\ 
b_{21}=a_{21}, \\
b_{02} = \frac{1}{2}(a_{02}+a_{12}+a_{21} \mp \sqrt{(-a_{02}-a_{12}-a_{21})^2-4a_{02}a_{21}})
\end{multline*}
%{\color{red}check parenthesis above}

See \cite{meshkat-rosen-sullivant} for more examples of this technique.

\end{ex}

\subsection{Godfrey-Chapman geometric rules} \label{subsection:gfgeorules}

Our goal in this paper is to find sufficient conditions for indistinguishability based on graph structure.  Below, we outline previous work that gives necessary conditions for indistinguishability based on graph structure, which we refer to as the Godfrey-Chapman geometric rules \cite{godfrey-chapman}.

\subsubsection{\textit{Rule 1.} The length of the shortest path from any perturbed (input) compartment to any observed (output) compartment is preserved.}
%{\color{red}should we add a remark here referencing the tree model paper, and preserving the number of rhs coefficients?}

%This necessary condition directly connects to the number of coefficients on the right-hand side of the input-output equation.  In the case when there is a single input and single output, the number of nontrivial coefficients on the right-hand side of the corresponding input-output equation according the Corollary 3.4 of \cite{treemodels} is:
%\[
%\# \text{ on RHS} = \begin{cases}
%n-1 & \text{ if } In=Out \\
%n-\text{dist}(In,Out) & \text{ if } In \neq Out
%\end{cases}.
%\]
%Here, $\text{dist}(In,Out)$ corresponds to the length of the shortest directed path from $In$ to $Out$.  In the case of multiple inputs or multiple outputs, we simply apply this formula for each distinct input/output pair.
%\begin{note} I think something similar can be said in cases when we have more than one input or output, but we'd have to think more about it.
%\end{note}

\subsubsection{\textit{Rule 2.} The number of compartments with a path to any given observed compartment is preserved.}

Note that this is satisfied if we assume our model is output connectable (see Definition \ref{defn:outputreachable}).
%For the case when we have a single output, this is essentially given if we assume observability i.e. output connectability.

\subsubsection{Rule 3. The number of compartments that can be reached from a given perturbed compartment is preserved.}

Note that this is satisfied if we assume our model is input connectable (see Definition \ref{defn:outputreachable}).

%Given that we have output connectability, one element of this rule relates to the number of paths from the input to the output.

\subsubsection{Rule 4. The number of traps is preserved.}

A trap is a compartment or a strongly connected set of compartments from which no path exists to any compartment outside the trap, including the environment (i.e.~leaks).  %(Essentially sinks and super-sinks).  
Note that if we again assume output connectability, then the only possible traps must include an output. %{\color{red} this is why our output cannot have the leak}

We will see shortly that the class of graphs we consider automatically satisfy these conditions. 

\begin{rmk} Rule 1 directly connects to the number of coefficients on the right-hand side of the input-output equation.  In the case when there is a single input and single output, the number of nontrivial coefficients on the right-hand side of the corresponding input-output equation according the Corollary 3.4 of \cite{treemodels} is:
\[
\# \text{ on RHS} = \begin{cases}
n-1 & \text{ if } In=Out \\
n-\text{dist}(In,Out) & \text{ if } In \neq Out
\end{cases}.
\]
Here, $\text{dist}(In,Out)$ corresponds to the length of the shortest directed path from $In$ to $Out$.  In the case of multiple inputs or multiple outputs, we simply apply this formula for each distinct input/output pair.
\end{rmk}

\subsection{Permutation indistinguishability and class of graphs}

The previous examples demonstrate the need to find the input-output equations first before testing indistinguishability. %even among models already known to be identifiable that have the same input/output compartments.  
However, finding the input-output equations and then testing the sufficient conditions requires symbolic computation, which can be tedious for large models.  Our goal in this paper is to circumvent that step of finding the input-output equations and instead determine indistinguishability of certain models from the graph itself.  %In other words, starting with a class of models that are known to satisfy the necessary conditions for indistinguishability, we would like to determine if the models are indistinguishable from the graph structure.  
We will restrict our efforts to a certain class of models and thus find sufficient conditions for indistinguishability based on the graph structure.  

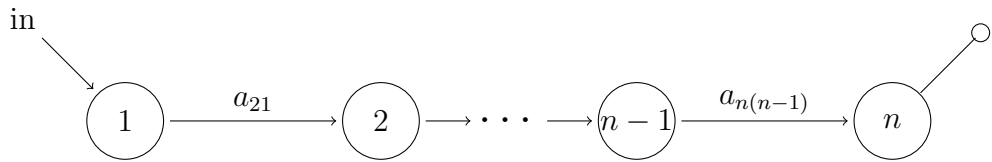
\begin{figure} 
\begin{center}
\begin{tikzpicture}[scale=1.7]
%-----------------------------
% M/G
%-----------------------------
 	\draw (0,0) circle (0.3);
 	\draw (2,0) circle (0.3);
 	\draw (4,0) circle (0.3);
 	\draw (6,0) circle (0.3);

% LABEL: numbers
    	\node[] at (0, 0) {1};
    	\node[] at (2, 0) {2};
    	\node[] at (4, 0) {$n-1$};
    	\node[] at (6, 0) {$n$};
% ARROWS
	 \draw[->] (0.35, 0) -- (1.65, 0);
%	 \draw[->] (2.35,.1) -- (3.65,.1);
	 %\draw[<-] (2.35,-.1) -- (3.65,-.1);
	 \draw[->] (4.35,0) -- (5.65,0);
\node at (3,0) {\Large $\cdots$};
\draw[->] (2.35,0) -- (2.7,0);
\draw[->] (3.3,0) -- (3.65,0);

	 %\draw[<-] (4.35,-.1) -- (5.65,-.1);
% RATES
   	 \node[] at (1, 0.15) {$a_{21}$};
%	\node[] at (3,0.3) {$a_{32}$};
	%\node[] at (3,-.3) {$a_{23}$};
	\node[] at (5,.15) {$a_{n(n-1)}$};
	%\node[] at (5,-.3) {$a_{34}$};
%OUTPUT 
	\draw (6.69,.69) circle (0.07);	
	 \draw[-] (6.65, .65 ) -- (6.22, .22);
% Input 
	 \draw[->] (-.65, .65) -- (-.25, .25);	
   	 \node[] at (-.8,.8) {in};
% Leaks
	% \draw[->] (0,-.3) -- (0, -.8);	
   	 %\node[] at (.35, -.5) {$a_{01}$};
%	 \draw[->] (4,-.3) -- (4, -.9);	
 %  	 \node[] at (4.35, -.7) {$a_{53}$};
	 %\draw[->] (4,.3) -- (4, .8);	
   	 %\node[] at (4.3, .5) {$a_{03}$};
	 %\draw[->] (6,.3) -- (6, .8);	
   	 %\node[] at (6.3, .5) {$a_{04}$};
 % BOX
%\draw (-1.5,-2) rectangle (7, 2);
% LABEL: Model name
 %  	\node[] at (3, -1.5) {$\cm'$};
    	
%-----------------------------

\end{tikzpicture}

\end{center}
\caption{A basic $n$ compartment path model without any leaks.}
\label{fig:path}
\end{figure} 

We narrow down our class of models to \textit{paths} from input to output with various detours. The basic starting point is a \textit{path model}:

\begin{defn}
    An n-compartment model $\cm = (G, In, Out, Leak)$ is a \textit{path model} if $G$ is given by a path from compartment $1$ to compartment $n$, $In=\{1\}$, and $Out=\{n\}$.  We denote the graph $G$ as $P_n$.
\end{defn}

The graph in Figure \ref{fig:path} gives a path model without any leaks.

Notice that in a path model, the output-reachable subgraph is the entire graph, thus the model is output connectable.  It is also input connectable.  That is, every compartment has a path to the output and every compartment can be reached from the input.  Thus, these models automatically satisfy Rules 2 and 3 of the Godfrey-Chapman necessary conditions.  Rule 1 is automatically satisfied as each model contains a simple path from input to output.  To satisfy Rule 4, we require that compartment $n$ does not contain a leak, which explains why the model $\cm'''$ in Figure \ref{fig:movingleaknonindist} is not indistinguishable with models $\cm, \cm'$, and $\cm''$ in Example \ref{ex:movingleaknonindist}.  

We will also be considering variations on path models that have extra vertices, edges, or leaks. One such example is a path model with a cycle along the last two vertices, which we call a \textit{terminal cycle}, as in model $\cm'$ in Figure \ref{fig:p3leak2ex}.  We call models such as these \textit{skeletal path models}, as the graph contains a ``backbone'' of a path along with other vertices, edges, or leaks:

%{  Should we use this terminology?}

\begin{defn}  A model $\cm = (G, In, Out, Leak)$ is a \textit{skeletal path model} if $G$ contains a path from compartment $1$ to $n$, $In={1}$, and $Out={n}$.  
\end{defn}

It is clear that Rules 1-4 of the Godfrey-Chapman necessary conditions are satisfied for skeletal path models as long as the shortest path from input to output is the ``skeletal path'' and there is no leak from compartment $n$.  Now we will prove indistinguishability via a simple permutation of parameters: 

\begin{defn} \label{defn:permutation}
    We say two models $\cm$ and $\cm'$ are \textit{indistinguishable via a permutation of parameters} or \textit{permutation indistinguishable} if models $\cm$ and $\cm'$ have the same input-output equations up to a renaming of the parameters.  This means there is a bijection $\Phi$ from the parameters in $\cm$ to the parameters in $\cm'$ such that the coefficients of $\cm'$ are exactly the coefficients of $\cm$ under $\Phi$. 
\end{defn}

\begin{rmk}
    Under the map $\Phi$, we have $p \mapsto p'$.  We will abuse notation and apply $\Phi$ to polynomial functions $f(p)$ as well, where $\Phi(f(p)) = f(\Phi(p))$.
\end{rmk}

We now show models that are permutation indistinguishable are, in fact, indistinguishable.

\begin{prop}
    Assume $\cm$ and $\cm'$ are permutation indistinguishable.  Then they are also indistinguishable.
\end{prop}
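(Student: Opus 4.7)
The plan is to unpack the definitions and verify that the bijection $\Phi$ supplied by permutation indistinguishability is exactly what is needed to establish the set-theoretic equality $c(\Theta) = c'(\Theta')$ from the remark following the definition of indistinguishability. The key observation is that Definition \ref{defn:permutation} gives not merely a set-level equality but the factorization $c = c' \circ \Phi$ of maps on parameter space.

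First I would fix notation: let $c : \Theta \to \rr^m$ and $c' : \Theta' \to \rr^m$ be the two coefficient maps, and let $\Phi : \Theta \to \Theta'$ be the bijection from the definition, so that by hypothesis $c'(\Phi(p)) = c(p)$ for every $p \in \Theta$. Note that because $\Phi$ is a renaming of parameters (i.e.\ a bijection on the finite label set indexing the rate constants and leaks), it preserves the structure of $\Theta$ and $\Theta'$ as parameter spaces — in particular it sends positive rate constants to positive rate constants, so $\Phi$ does restrict to a bijection between the natural (e.g.\ positive-orthant) parameter spaces of the two models.

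Next I would verify the two containments. For the forward direction, given any $p \in \Theta$, set $p' := \Phi(p) \in \Theta'$; then $c'(p') = c'(\Phi(p)) = c(p)$, witnessing that $c(p)$ lies in $c'(\Theta')$. For the reverse direction, given any $p' \in \Theta'$, set $p := \Phi^{-1}(p') \in \Theta$; then $c(p) = c'(\Phi(p)) = c'(p')$, which supplies the required preimage. Together these show that for every $p' \in \Theta'$ there exists $p \in \Theta$ with $c(p) = c'(p')$, and vice versa, which is exactly the definition of indistinguishability.

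There is no real obstacle here beyond bookkeeping; the only point that requires any attention is checking that $\Phi$ genuinely lands in $\Theta'$ (and $\Phi^{-1}$ in $\Theta$) once parameter constraints are taken into account, which as noted above is immediate from the fact that a renaming of positive rate constants is again a positive rate constant. Consequently the proof is essentially a one-line application of the hypothesis $c = c' \circ \Phi$ with $\Phi$ a bijection.
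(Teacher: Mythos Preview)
Your proposal is correct and follows essentially the same approach as the paper's proof: both argue that the bijection $\Phi$ forces the images of the two coefficient maps to coincide. Your version is simply more explicit in verifying the two containments and in checking that $\Phi$ respects the parameter constraints, whereas the paper states in one line that the image of $c(p)$ equals the image of $c(p')$ since $p'$ is a renaming of $p$.
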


\begin{proof} By assumption, $\cm$ and $\cm'$ have the same input-output equations up to a renaming of the parameters.  Thus they have the same input-output equation structure and satisfy the same coefficient dependency relationships.  Let $c(p)$ be the coefficient map for $\cm$ and $c(p')$ be the coefficient map for $\cm'$, where $p'$ is a renaming of the parameters $p$.  Then clearly the image of $c(p)$ equals the image of $c(p')$, and thus the models are indistinguishable. 
\end{proof}

%\begin{rmk} The phrase \textit{permutation indistinguishability} does not necessarily refer to a precise permutation of parameters from one model to the next, rather we chose that terminology because our method of proof involves permuting certain submatrices of a determinant. In term of the parameters, permutation indistinguishability is really just a renaming of parameters.
%{\color{red}the proof doesn't really involve permuting matrices anymore exactly.}
%\end{rmk}

We end this section by noting that permutation indistinguishability is an equivalence relation.

\begin{prop} Permutation indistinguishability is an equivalence relation.
\end{prop}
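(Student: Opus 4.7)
The plan is to verify the three defining properties of an equivalence relation: reflexivity, symmetry, and transitivity, directly from Definition \ref{defn:permutation}. In each case the witness bijection is built from the obvious candidate (identity, inverse, or composition), and the content of the proof is just checking that the coefficient-matching condition is preserved.

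For reflexivity, I would take $\Phi = \mathrm{id}$ on the parameter set of $\cm$. This is a bijection, and every coefficient $c_i(p)$ of $\cm$ is trivially equal to itself as a polynomial in $p$, so $\cm$ is permutation indistinguishable from itself.

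For symmetry, assume $\cm$ and $\cm'$ are permutation indistinguishable via a bijection $\Phi$ from the parameters of $\cm$ to those of $\cm'$. Since $\Phi$ is a bijection, its inverse $\Phi^{-1}$ exists and is a bijection from the parameters of $\cm'$ to those of $\cm$. The hypothesis says that for each coefficient index $i$, the coefficient $c'_i(p')$ of $\cm'$ equals $c_i(\Phi^{-1}(p'))$ when viewed as polynomials; rewriting this relation as $c_i(p) = c'_i(\Phi(p))$ and then renaming via $\Phi^{-1}$ shows that the coefficients of $\cm$ are exactly those of $\cm'$ under $\Phi^{-1}$, giving $\cm' \sim \cm$.

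For transitivity, assume $\cm \sim \cm'$ via $\Phi$ and $\cm' \sim \cm''$ via $\Psi$. The composition $\Psi \circ \Phi$ is a bijection from the parameters of $\cm$ to those of $\cm''$. Chaining the two coefficient identities $c'_i = c_i \circ \Phi^{-1}$ and $c''_i = c'_i \circ \Psi^{-1}$ (again, as polynomial identities on corresponding coefficients) yields $c''_i = c_i \circ (\Psi \circ \Phi)^{-1}$, so $\cm$ and $\cm''$ are permutation indistinguishable via $\Psi \circ \Phi$.

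The only point that needs a little care is the bookkeeping in the symmetry step, because the definition phrases the renaming in one direction (coefficients of $\cm'$ as those of $\cm$ under $\Phi$) while we need the reverse statement; but since $\Phi$ is a bijection and the coefficient relations are polynomial identities, this is just a matter of composing with $\Phi^{-1}$. No additional hypotheses (such as structural identifiability) are needed.
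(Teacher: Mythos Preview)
Your proposal is correct and follows essentially the same approach as the paper: verify reflexivity via the identity map, symmetry via the inverse bijection $\Phi^{-1}$, and transitivity via the composition $\Psi\circ\Phi$. If anything, your treatment is slightly more careful than the paper's in tracking how the coefficient identities pass through the inverse and the composition.
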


\begin{proof}
    \begin{itemize}
    \item Reflexive: A model $\cm$ is permutation indistinguishable from itself via the identity map. 
    \item Symmetric: Assume a model $\cm$ is indistinguishable from a model $\cm'$.  This means there is a bijection $\Phi$ from the coefficients of $\cm$ to the coefficients of $\cm'$.  The inverse map $\Phi^{-1}$ proves that $\cm'$ is indistinguishable from $\cm$.
    \item Transitive: Assume $\cm$ is indistinguishable from $\cm'$ and $\cm'$ is indistinguishable from $\cm''$.  This means there is a bijection $\Phi$ from the coefficients $\cm$ to the coefficients of $\cm'$ and also a bijection $\Psi$ from the coefficients $\cm'$ to the coefficients of $\cm''$.  Composing these bijections, we get that there is a bijection $\Psi (\Phi)$ from the coefficients $\cm$ to the coefficients of $\cm''$, thus $\cm$ is indistinguishable from $\cm''$.  
    \end{itemize}
\end{proof}

%We will show that, with some extra conditions, they also satisfy all the necessary conditions for indistinguishability.  We will then prove indistinguishability by restricting our analysis to the compartmental matrices.

%Class of graphs: output reachable, same input/output compartments, same number of parameters and compartments, path with either a leak from 1 to n-1 or a terminal cycle between n-1 and n

%We will obtain the same differential monomials by examining the input-output equations.  We will show that since the coefficients are permutations of one another, we not only obtain same differential monomials but also same coefficient dependency relationships.

\section{Paths with leaks and terminal cycles} \label{section:leak}

%\begin{defn}
%Suppose two models $\cm$ and $\cm'$ have the same input-output structure.  Let $c \colon \Theta \to \mathbb{R}^m$ and $c' \colon \Theta' \to \mathbb{R}^m$ be the coefficient maps for $\cm$ and $\cm'$ respectively.  We say that $\cm$ and $\cm'$ are indistinguishable if for all $p' \in \Theta'$, there exists at least one $p\in \Theta$ such that $c(p) = c'(p')$, and vice versa.  Equivalently, $\cm$ and $\cm'$ are indistinguishable if $c(\Theta) = c'(\Theta')$.
%\end{defn}

Recall Figure \ref{fig:permute} and Example \ref{ex:permute}, which we revisit now: 

\begin{ex}\label{ex:p3ind}
Consider the two models $\cm = (P_3,\{1\},\{3\},\{1\})$ and $\cm'=(P_3,\{1\},\{3\},\{2\})$ shown in Figure \ref{fig:permute}. The input-output equations of $\cm$ and $\cm'$ respectively are
\begin{align*}
     \cm: &\quad  \dddot{y}_3+(a_{32}+a_{01}+a_{21})\ddot{y}_3+(a_{01}a_{32}+a_{21}a_{32})\dot{y}_3 = (a_{21}a_{32})u_1\\
    \cm': & \quad \dddot{y}_3+(a_{21}+a_{02}+a_{32})\ddot{y}_3+(a_{02}a_{21}+a_{32}a_{21})\dot{y}_3 = (a_{32}a_{21})u_1\\
\end{align*}

Recall these models are indistinguishable via a renaming of parameters.  We describe this renaming using the map $\Phi$ we defined in Definition \ref{defn:permutation}.  The models $\cm$ and $\cm'$ are indistinguishable under the map 
\[
\Phi \colon
\begin{pmatrix}
a_{01} \\ a_{21} \\ a_{32} 
\end{pmatrix} \mapsto \begin{pmatrix}
a_{02} \\ a_{32} \\ a_{21}
\end{pmatrix}
\]
\end{ex}

%\begin{prop} \label{prop:io-det}
%Let $\cm = (G, In, Out, Leak)$ be a linear compartmental model with $n$ compartments and at least one input.  Define $\partial I$ to be the $n \times n$ matrix in which every diagonal entry is the differential operator $d/dt$ and every off-diagonal entry is 0.  Let $A$ be the compartmental matrix corresponding to $\cm$.  Then, the following equations are input-output equations of $\cm$:
%\begin{equation} \label{eq:io-det}
%    \det(\partial I -A)y_i = \sum_{j\in In}(-1)^{i+j} \det(\partial I - A)^{j,i} u_j \quad \text{for } i\in Out.
%\end{equation}
%\end{prop}
%\textit{Notation:} $\det(M)^{j,i}$ is the determinant of the matrix $M$ with row $j$ and column $i$ removed.

We would now like to generalize this example by considering a path model with $n$ vertices.   

The basic path model with no leaks has a compartmental matrix of the form:

\begin{equation} \label{eq:comp-mat}
A = \begin{bmatrix}
-a_{21} & 0 & \cdots & \cdots &  0 & 0 \\
a_{21} & -a_{32} & \ddots & \ddots  & \vdots & \vdots \\
0 & a_{32} & -a_{43} & \ddots &  \vdots & \vdots \\
\vdots & \ddots & \ddots & \ddots &0 & 0 \\
0 & \cdots & 0 & \ddots & -a_{n(n-1)} & 0 \\
0 & \cdots & 0 & 0& a_{n(n-1)} & 0
\end{bmatrix}
\end{equation}

The compartmental matrix of a basic path model with the first row and last column removed has the form

\begin{equation} \label{eq:comp-mat1n}
A^{1,n} = \begin{bmatrix}
a_{21} & -a_{32} & 0 & \cdots  & 0 \\
0 & a_{32} & -a_{43} & \ddots &  \vdots  \\
\vdots & \ddots & \ddots & \ddots &0  \\
0 & \cdots & 0 & \ddots & -a_{n(n-1)}  \\
0 & \cdots & 0 & 0& a_{n(n-1)} 
\end{bmatrix}
\end{equation}
{ which we will take advantage of in the proof of the following theorem.}

\begin{thm} \label{thm:leak}
The path models $\cm_i=(P_n,\{1\},\{n\},\{i\})$ and $\cm_j = (P_n,\{1\},\{n\},\{j\})$ are indistinguishable for all $i,j <n$.
\end{thm}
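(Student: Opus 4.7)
\bigskip

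\noindent\textbf{Proof proposal.} The plan is to show that $\cm_i$ and $\cm_j$ are permutation indistinguishable by writing down their input-output equations in closed form using Theorem~\ref{thm:ioscc} and then exhibiting an explicit bijection of parameters that identifies them. Set $\alpha_k := a_{(k+1)k}$ for $k = 1,\dots,n-1$ (the ``chain rates'') and, for the model $\cm_i$, let $\lambda_i := a_{0i}$ be the leak parameter. The compartmental matrix of $\cm_i$ is lower bidiagonal (as in \eqref{eq:comp-mat}, with the extra $-\lambda_i$ added to the $(i,i)$ entry), so $\partial I - A$ is lower triangular and
\[
\det(\partial I - A)\;=\;\partial\cdot(\partial+\alpha_i+\lambda_i)\prod_{k\neq i,\, k\leq n-1}(\partial+\alpha_k).
\]

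Next I would show that the RHS coefficient is a single scalar by computing $(\partial I - A)^{1,n}$. After striking row $1$ and column $n$ in the lower bidiagonal matrix, the resulting $(n-1)\times(n-1)$ matrix is upper bidiagonal: its diagonal entries are exactly $-\alpha_1,-\alpha_2,\dots,-\alpha_{n-1}$, so its determinant is $(-1)^{n-1}\alpha_1\cdots\alpha_{n-1}$. With the sign $(-1)^{1+n}$ from Theorem~\ref{thm:ioscc}, the input-output equation of $\cm_i$ becomes
\[
\partial\,\prod_{k=1}^{n-1}(\partial+\beta_k^{(i)})\,y_n \;=\; \alpha_1\alpha_2\cdots\alpha_{n-1}\,u_1,
\]
where $\beta_i^{(i)} = \alpha_i + \lambda_i$ and $\beta_k^{(i)} = \alpha_k$ for $k\neq i$. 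The coefficients of $y_n^{(m)}$ for $m=1,\dots,n-1$ are therefore the elementary symmetric polynomials $e_m(\beta_1^{(i)},\dots,\beta_{n-1}^{(i)})$, and the single RHS coefficient is $\alpha_1\cdots\alpha_{n-1}$.

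Given this closed form, the permutation is straightforward to write down. Choose any bijection $\sigma\colon\{1,\dots,n-1\}\to\{1,\dots,n-1\}$ with $\sigma(i)=j$, and define $\Phi$ on the parameters of $\cm_j$ (primed) by
\[
\Phi(a'_{(k+1)k}) \;=\; a_{(\sigma^{-1}(k)+1)\,\sigma^{-1}(k)}\quad\text{for } k=1,\dots,n-1,\qquad \Phi(a'_{0j}) \;=\; a_{0i}.
\]
Under $\Phi$, the product $\alpha'_1\cdots\alpha'_{n-1}$ maps to $\alpha_1\cdots\alpha_{n-1}$, and the multiset $\{\beta_1^{(j)},\dots,\beta_{n-1}^{(j)}\}$ maps to $\{\beta_1^{(i)},\dots,\beta_{n-1}^{(i)}\}$ because the one ``shifted'' entry $\alpha_j' + \lambda_j'$ maps to $\alpha_i + \lambda_i$, and all other $\beta_k^{(j)} = \alpha_k'$ map to $\alpha_{\sigma^{-1}(k)} = \beta_{\sigma^{-1}(k)}^{(i)}$. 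Since elementary symmetric polynomials are invariant under permutations of their arguments, every coefficient of the input-output equation of $\cm_j$ is sent to the corresponding coefficient of $\cm_i$. Thus $\cm_i$ and $\cm_j$ are permutation indistinguishable, and hence indistinguishable.

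The main obstacle is not any clever argument but the verification that the compartmental matrix and its $(1,n)$-cofactor are both triangular with diagonals I have claimed; once that is established the determinants are products and the permutation bookkeeping is routine. A minor care-point is that the leak at compartment $n$ is excluded (which is why $j<n$ is required), so the $(n,n)$ entry of $\partial I - A$ is exactly $\partial$ and the overall factor of $\partial$ in $\det(\partial I - A)$, which is what prevents a constant term in $y_n$ from appearing, is preserved under the bijection.
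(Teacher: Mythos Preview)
Your proposal is correct and follows essentially the same approach as the paper: compute the input-output equation via Theorem~\ref{thm:ioscc}, exploit that $\partial I - A$ is lower triangular and $(\partial I - A)^{1,n}$ is upper triangular (with leak-free diagonal), and then exhibit a parameter bijection. The only cosmetic differences are that the paper uses the specific transposition swapping indices $i$ and $j$ while you allow an arbitrary $\sigma$ with $\sigma(i)=j$, and you phrase the LHS match via invariance of elementary symmetric polynomials rather than by directly matching factors of the product; neither changes the substance of the argument.
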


\begin{proof}

Let $A$ be the compartmental matrix of $\cm_i$, and $B$ be the compartmental matrix of $\cm_j$.  Both $A$ and $B$ have form similar to Equation \ref{eq:comp-mat} with an additional $a_{0i}$ and $a_{0j}$ subtracted from the $i^\text{th}$ and $j^\text{th}$ diagonal elements respectively. The same is true for the matrices $\partial I -A$ and $\partial I -B$.

From Theorem \ref{thm:ioscc}, and specifically Equation \ref{eq:io-det}, the left-hand sides of the input-output equations of $\cm$ and $\cm'$ are generated by the determinant of the $\partial I -A$ and $\partial I -B$ matrices respectively.  Since $\partial I -A$ is a lower triangular matrix, the differential operator of the left-hand side of the input-output equation of $\cm$ is:
\begin{equation}\label{eq:lhs-leakmove}
\left( \frac{d}{dt} + a_{0i} + a_{(i+1)i} \right) \left( \frac{d}{dt} \right)  \prod_{\substack{k \in [n],\\ k \neq i}} \left( \frac{d}{dt} + a_{(k+1)k} \right)
\end{equation}
{ where $[n]=\{1,2,\ldots, n\}.$}
Similarly, the differential operator of the left-hand side of the input-output equation of $\cm'$ is:
\[
\left( \frac{d}{dt} + a_{0j} + a_{(j+1)j} \right) \left( \frac{d}{dt} \right)  \prod_{\substack{k \in [n],\\ k \neq j}} \left( \frac{d}{dt} + a_{(k+1)k} \right).
\]

According to Theorem \ref{thm:ioscc}, and again Equation \ref{eq:io-det}, the right-hand side of the input-output equations of $\cm$ and $\cm'$ are the determinants of $(\partial I - A)^{1,n}$ and $(\partial I - B)^{1,n}$ respectively.  Note that in either case, since $\cm$ and $\cm'$ are identical outside of the leak element, and both $(\partial I - A)^{1,n}$ and $(\partial I - B)^{1,n}$ are upper-triangular, as seen in Equation \ref{eq:comp-mat1n}, with no leak terms on the main diagonal.  Therefore, the right-hand side differential operators of both $\cm$ and $\cm'$ are 
\begin{equation}\label{eq:rhs-leakmove}
(-1)^{1+n}\prod_{k\in [n]} \left(- a_{(k+1)k} \right) = (-1)^{1+n} \cdot (-1)^{n} \prod_{k\in [n]} \left(a_{(k+1)k} \right) = - \prod_{k\in [n]} \left(a_{(k+1)k} \right). 
\end{equation}

Thus, under the [bijective] map $\Phi$ from the parameters of $\cm$ to the parameters of $\cm'$ defined as:
\[
\Phi( a_{uv} ) = \begin{cases}
a_{0j} & \text{ for }u=0\ \& \ v=i \\
a_{(j+1)j} & \text{ for }u=(i+1)\ \&\ v=i \\
a_{(i+1)i}  &  \text{ for }u=(j+1) \ \&\  v=j \\
a_{uv} & \text{ otherwise}
\end{cases}
\]
$\cm$ and $\cm'$ are permutation indistinguishable.

\end{proof}

\begin{figure} %Example ...
\begin{center}

\begin{tikzpicture}[scale=.9]
%-----------------------------
% M/G
%-----------------------------
 	\draw (0,0) circle (0.3);
 	\draw (2,0) circle (0.3);
 	\draw (4,0) circle (0.3);
% LABEL: numbers
    	\node[] at (0, 0) {1};
    	\node[] at (2, 0) {2};
    	\node[] at (4, 0) {$3$};
% ARROWS
	 \draw[->] (0.35, .1) -- (1.65, .1);
	 \draw[->] (2.35,.1) -- (3.65,.1);
	 %\draw[<-] (2.35,-.1) -- (3.65,-.1);
	 %\draw[<-] (4.35,-.1) -- (5.65,-.1);
% RATES
   	 \node[] at (1, 0.3) {$a_{21}$};
	\node[] at (3,0.3) {$a_{32}$};
	%\node[] at (3,-.3) {$a_{23}$};
	%\node[] at (5,-.3) {$a_{34}$};
%OUTPUT 
	\draw (4.69,.69) circle (0.07);	
	 \draw[-] (4.65, .65 ) -- (4.22, .22);
% Input 
	 \draw[->] (-.65, .65) -- (-.25, .25);	
   	 \node[] at (-.8,.8) {in};
% Leaks
	% \draw[->] (0,-.3) -- (0, -.8);	
   	 %\node[] at (.35, -.5) {$a_{01}$};
	 \draw[->] (2,-.3) -- (2, -.9);	
   	 \node[] at (2.35, -.7) {$a_{02}$};
	 %\draw[->] (4,.3) -- (4, .8);	
   	 %\node[] at (4.3, .5) {$a_{03}$};
	 %\draw[->] (6,.3) -- (6, .8);	
   	 %\node[] at (6.3, .5) {$a_{04}$};
 % BOX
\draw (-1.5,-2) rectangle (5, 2);
% LABEL: Model name
    	\node[] at (2, -1.5) {$\cm$};
    	
%-----------------------------

\end{tikzpicture}
\begin{tikzpicture}[scale=.9]
%-----------------------------
% M/G
%-----------------------------
 	\draw (0,0) circle (0.3);
 	\draw (2,0) circle (0.3);
 	\draw (4,0) circle (0.3);
% LABEL: numbers
    	\node[] at (0, 0) {1};
    	\node[] at (2, 0) {2};
    	\node[] at (4, 0) {$3$};
% ARROWS
	 \draw[->] (0.35, .1) -- (1.65, .1);
	 \draw[->] (2.35,.1) -- (3.65,.1);
	 \draw[<-] (2.35,-.1) -- (3.65,-.1);
	 %\draw[<-] (4.35,-.1) -- (5.65,-.1);
% RATES
   	 \node[] at (1, 0.3) {$a_{21}$};
	\node[] at (3,0.3) {$a_{32}$};
	\node[] at (3,-.3) {$a_{23}$};
	%\node[] at (5,-.3) {$a_{34}$};
%OUTPUT 
	\draw (4.69,.69) circle (0.07);	
	 \draw[-] (4.65, .65 ) -- (4.22, .22);
% Input 
	 \draw[->] (-.65, .65) -- (-.25, .25);	
   	 \node[] at (-.8,.8) {in};
% Leaks
	% \draw[->] (0,-.3) -- (0, -.8);	
   	 %\node[] at (.35, -.5) {$a_{01}$};
%	 \draw[->] (0,-.3) -- (0, -.9);	
%   	 \node[] at (0.35, -.7) {$a_{01}$};
	 %\draw[->] (4,.3) -- (4, .8);	
   	 %\node[] at (4.3, .5) {$a_{03}$};
	 %\draw[->] (6,.3) -- (6, .8);	
   	 %\node[] at (6.3, .5) {$a_{04}$};
 % BOX
\draw (-1.5,-2) rectangle (5, 2);
% LABEL: Model name
    	\node[] at (2, -1.5) {$\cm'$};
    	
%-----------------------------

\end{tikzpicture}

\end{center}
\caption{Two skeletal path models described in Example \ref{ex:p3leak2ex}}
\label{fig:p3leak2ex}

\end{figure}
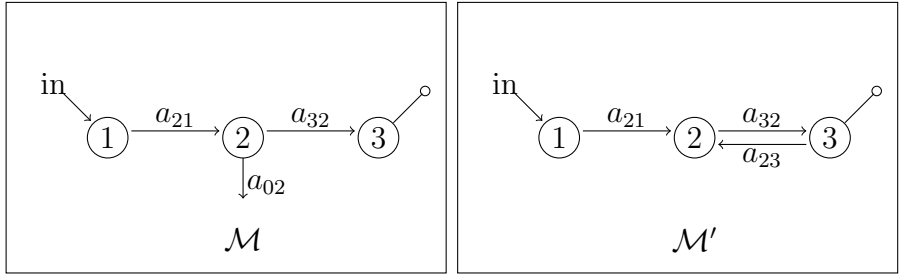

We now again consider Figure \ref{fig:permute} and Example \ref{ex:permute}, which we revisit and rename the models to those in Figure \ref{fig:p3leak2ex}.  

\begin{ex}\label{ex:p3leak2ex}
Consider the two models $\cm = (P_3,\{1\},\{3\},\{2\})$ and $\cm'=(P_3\cup \{3 \to 2\},\{1\},\{3\},\emptyset)$ shown in Figure \ref{fig:p3leak2ex}. 
The input-output equations of $\cm$ and $\cm'$ respectively are
\begin{align*}
    \cm: & \quad \dddot{y}_3+(a_{21}+a_{02}+a_{32})\ddot{y}_3+(a_{02}a_{21}+a_{32}a_{21})\dot{y}_3 = (a_{32}a_{21})u_1\\
    \cm': & \quad \dddot{y}_3+(a_{21}+a_{23}+a_{32})\ddot{y}_3+(a_{23}a_{21}+a_{32}a_{21})\dot{y}_3 = (a_{32}a_{21})u_1
 \end{align*}
Recall these models are indistinguishable via a renaming of parameters.  We describe this renaming using the map $\Phi$ we defined in Definition \ref{defn:permutation}.  The models $\cm$ and $\cm'$ are indistinguishable under the map 
\[
\Phi \colon
\begin{pmatrix}
a_{02} \\ a_{21} \\ a_{32} 
\end{pmatrix} \mapsto \begin{pmatrix}
a_{23} \\ a_{21} \\ a_{32}
\end{pmatrix}
\]
\end{ex}

We would now like to generalize this example.

\begin{thm} \label{thm:terminalcycle}
The path models $\cm=(P_n,\{1\},\{n\},\{n-1\})$ and $\cm' = (P_n \cup \{n \to n-1\},\{1\},\{n\},\emptyset)$ are indistinguishable.
\end{thm}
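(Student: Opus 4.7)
The plan is to follow the strategy of Theorem \ref{thm:leak}: apply Theorem \ref{thm:ioscc} to write each input-output equation as a pair of determinants, and exhibit an explicit bijection $\Phi$ of parameters carrying one equation to the other. Since the two models share the same backbone $P_n$ and differ only in how vertex $n-1$ acquires a single extra parameter---either through a leak $a_{0(n-1)}$ in $\cm$ or through the edge $a_{(n-1)n}$ in $\cm'$---the natural candidate is $\Phi(a_{0(n-1)}) = a_{(n-1)n}$, with $\Phi$ acting as the identity on every other rate constant.

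The left-hand side for $\cm$ follows immediately from Equation~\eqref{eq:lhs-leakmove} with $i=n-1$, giving
\[
\det(\partial I - A) = (\partial + a_{0(n-1)} + a_{n(n-1)}) \cdot \partial \cdot \prod_{k=1}^{n-2}(\partial + a_{(k+1)k}).
\]
For $\cm'$, the matrix $\partial I - A'$ is no longer lower triangular: the extra edge $n \to n-1$ produces an off-diagonal entry $-a_{(n-1)n}$ in position $(n-1,n)$ and changes the $(n,n)$ diagonal entry to $\partial + a_{(n-1)n}$. I would expand $\det(\partial I - A')$ along the last column, which still contains only two nonzero entries. The minor $M_{n,n}$ is the determinant of the lower-triangular upper-left $(n-1)\times(n-1)$ block and equals $\prod_{k=1}^{n-1}(\partial + a_{(k+1)k})$, while the minor $M_{n-1,n}$ is block-diagonal: an $(n-2)\times(n-2)$ lower-triangular block paired with a single leftover entry $-a_{n(n-1)}$ in the last row, yielding $-a_{n(n-1)}\prod_{k=1}^{n-2}(\partial + a_{(k+1)k})$. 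Combining the two cofactor terms and tracking the signs, the product $a_{(n-1)n}a_{n(n-1)}$ cancels exactly against the constant term of $(\partial + a_{(n-1)n})(\partial + a_{n(n-1)})$, leaving
\[
\det(\partial I - A') = \partial \cdot (\partial + a_{n(n-1)} + a_{(n-1)n}) \cdot \prod_{k=1}^{n-2}(\partial + a_{(k+1)k}),
\]
which is precisely the image of the $\cm$ left-hand side under $\Phi$.

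For the right-hand side, the key observation is that the matrices $(\partial I - A)^{1,n}$ and $(\partial I - A')^{1,n}$ are identical: the only entries where $A$ and $A'$ disagree lie in positions $(n-1,n)$ and $(n,n)$, both of which live in the column that is deleted. Hence Equation~\eqref{eq:io-det} produces the same right-hand side $(-1)^{1+n}\det\bigl((\partial I - A)^{1,n}\bigr)u_1$ for both models, and $\Phi$---acting as the identity on every parameter that appears here---carries it to itself.

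Finally $\Phi$ is evidently a bijection between the parameter sets: both models carry the $n-1$ edge rates $a_{(k+1)k}$ along the backbone, and $\Phi$ simply swaps the single extra parameter $a_{0(n-1)}$ of $\cm$ for the single extra parameter $a_{(n-1)n}$ of $\cm'$. This establishes permutation indistinguishability, hence indistinguishability. The main obstacle is the cofactor computation for $\cm'$: unlike in Theorem \ref{thm:leak}, the matrix is no longer triangular, and the argument hinges on the miraculous cancellation between the two products $a_{(n-1)n}a_{n(n-1)}$ in the expansion along the last column, which is what produces the extra factor of $\partial$ required to match the leak formula.
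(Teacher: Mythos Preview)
Your approach is essentially the same as the paper's: exhibit the bijection $\Phi(a_{0(n-1)})=a_{(n-1)n}$ (identity elsewhere), compute the two determinants from Theorem~\ref{thm:ioscc}, and verify they match under~$\Phi$. Your left-hand-side computation via cofactor expansion along the last column is equivalent to the paper's observation that the bottom-right $2\times 2$ block of $\partial I - A'$ contributes $(\partial + a_{(n-1)n})(\partial + a_{n(n-1)}) - a_{(n-1)n}a_{n(n-1)} = \partial(\partial + a_{(n-1)n} + a_{n(n-1)})$.

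There is, however, a small slip in your right-hand-side argument. You assert that $(\partial I - A)^{1,n}$ and $(\partial I - A')^{1,n}$ are \emph{identical} because $A$ and $A'$ disagree only in positions $(n-1,n)$ and $(n,n)$. That is not quite true: they also disagree at position $(n-1,n-1)$, where $A$ has $-(a_{0(n-1)}+a_{n(n-1)})$ and $A'$ has $-a_{n(n-1)}$, and this entry survives the deletion of row~$1$ and column~$n$. What saves you is that after deletion this entry lands on the \emph{superdiagonal} of an upper-triangular matrix (cf.\ Equation~\eqref{eq:comp-mat1n}), so it does not contribute to the determinant, which is the product of the diagonal entries $-a_{21},\ldots,-a_{n(n-1)}$ in both cases. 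The paper makes exactly this point: the leak (or extra-edge) term is absent from the main diagonal of the truncated matrix, so both right-hand sides reduce to Equation~\eqref{eq:rhs-leakmove}. With that correction your argument is complete and matches the paper's.
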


\begin{proof}
The model $\cm'$ has compartmental matrix of the form:

\begin{equation} \label{eq:comp-mat2}
A' = \begin{bmatrix}
-a_{21} & 0 & \cdots & \cdots &  0 & 0 \\
a_{21} & -a_{32} & \ddots & \ddots  & \vdots & \vdots \\
0 & a_{32} & -a_{43} & \ddots &  \vdots & \vdots \\
\vdots & \ddots & \ddots & \ddots &0 & 0 \\
0 & \cdots & 0 & \ddots & -a_{n(n-1)} & a_{(n-1)n} \\
0 & \cdots & 0 & 0& a_{n(n-1)} & -a_{(n-1)n}
\end{bmatrix}
\end{equation}

Again, via Theorem \ref{thm:ioscc} the left-hand side of the input-output equation corresponding to $\cm'$ has form:
\[
\left[\underbrace{\left( \frac{d}{dt}+a_{(n-1)n}\right)\left(\frac{d}{dt}+a_{n(n-1)} \right)-a_{n(n-1)}a_{(n-1)n}}_{\text{bottom right $2\times 2$ block}} \right] \underbrace{\prod_{i\in [n-1]} \left( \frac{d}{dt} + a_{(k+1)k} \right)}_{\text{first $n-2$ diagonal elements}}
\]

equivalently

\begin{equation}\label{eq:leak2exio}
\left(\frac{d}{dt}+a_{(n-1)n}+a_{n(n-1)}\right) \left(\frac{d}{dt}\right) \prod_{i\in [n-1]} \left( \frac{d}{dt} + a_{(k+1)k} \right)
\end{equation}

Removing the first row and last column of the $A'$ compartmental matrix yields a matrix of the same form as that in Equation \ref{eq:comp-mat1n}.  Therefore, the right-hand side of the input-output equation of $\cm'$ has the same form as Equation \ref{eq:rhs-leakmove}.

Thus, under the [bijective] map $\Phi$ from the parameters of $\cm$ to the parameters of $\cm'$ defined as:
\[
\Phi( a_{uv} ) = \begin{cases}
a_{(n-1)n} & \text{ for }u=0\ \& \ v=(n-1)\\
a_{uv} & \text{ otherwise}
\end{cases}
\]
$\cm$ and $\cm'$ are permutation indistinguishable.

\end{proof}

\section{Detour Indistinguishability} \label{section:detour}

We now consider a scenario as in Figure \ref{fig:detour2}, where a basic path model has a ``detour''{ , i.e. an outgoing edge off of the main path connecting to another connected graph with an incoming edge back to the main path}.  These models still satisfy our conditions of skeletal path models.

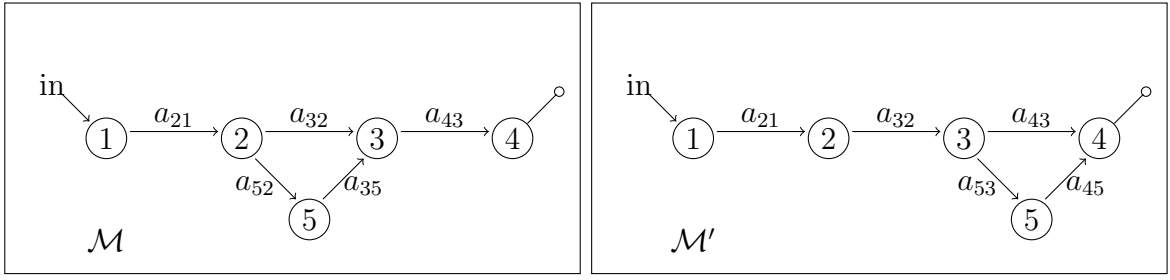
\begin{figure} 
    \centering
\begin{tikzpicture}[scale=.9]
%-----------------------------
% M/G
%-----------------------------
 	\draw (0,0) circle (0.3);
 	\draw (2,0) circle (0.3);
 	\draw (4,0) circle (0.3);
 	\draw (6,0) circle (0.3);
 	\draw (3,-1.2) circle (0.3);
% LABEL: numbers
    	\node[] at (0, 0) {1};
    	\node[] at (2, 0) {2};
    	\node[] at (4, 0) {$3$};
    	\node[] at (6, 0) {$4$};
    	\node[] at (3,-1.2) {$5$};
% ARROWS
	 \draw[->] (0.35, .1) -- (1.65, .1);
	 \draw[->] (2.35,.1) -- (3.65,.1);
	 %\draw[<-] (2.35,-.1) -- (3.65,-.1);
	 \draw[->] (4.35,.1) -- (5.65,.1);
	 %\draw[<-] (4.35,-.1) -- (5.65,-.1);
% RATES
   	 \node[] at (1, 0.3) {$a_{21}$};
	\node[] at (3,0.3) {$a_{32}$};
	%\node[] at (3,-.3) {$a_{23}$};
	\node[] at (5,.3) {$a_{43}$};
	%\node[] at (5,-.3) {$a_{34}$};
%OUTPUT 
	\draw (6.69,.69) circle (0.07);	
	 \draw[-] (6.65, .65 ) -- (6.22, .22);
% Input 
	 \draw[->] (-.65, .65) -- (-.25, .25);	
   	 \node[] at (-.8,.8) {in};
% Leaks
	% \draw[->] (0,-.3) -- (0, -.8);	
   	 %\node[] at (.35, -.5) {$a_{01}$};
	 \draw[->] (2.2,-.3) -- (2.8, -.9);	
   	 \node[] at (2.2, -.7) {$a_{52}$};
   	 \draw[->] (3.2,-.9) -- (3.8,-.3);
   	 \node[] at (3.8,-.7) {$a_{35}$};
	 %\draw[->] (4,.3) -- (4, .8);	
   	 %\node[] at (4.3, .5) {$a_{03}$};
	 %\draw[->] (6,.3) -- (6, .8);	
   	 %\node[] at (6.3, .5) {$a_{04}$};
 % BOX
\draw (-1.5,-2) rectangle (7, 2);
% LABEL: Model name
    	\node[] at (0, -1.5) {$\cm$};
    	
%-----------------------------

\end{tikzpicture}
\begin{tikzpicture}[scale=.9]
%-----------------------------
% M/G
%-----------------------------
 	\draw (0,0) circle (0.3);
 	\draw (2,0) circle (0.3);
 	\draw (4,0) circle (0.3);
 	\draw (6,0) circle (0.3);
 	\draw (5,-1.2) circle (0.3);
% LABEL: numbers
    	\node[] at (0, 0) {1};
    	\node[] at (2, 0) {2};
    	\node[] at (4, 0) {$3$};
    	\node[] at (6, 0) {$4$};
    	\node[] at (5,-1.2) {$5$};
% ARROWS
	 \draw[->] (0.35, .1) -- (1.65, .1);
	 \draw[->] (2.35,.1) -- (3.65,.1);
	 %\draw[<-] (2.35,-.1) -- (3.65,-.1);
	 \draw[->] (4.35,.1) -- (5.65,.1);
	 %\draw[<-] (4.35,-.1) -- (5.65,-.1);
% RATES
   	 \node[] at (1, 0.3) {$a_{21}$};
	\node[] at (3,0.3) {$a_{32}$};
	%\node[] at (3,-.3) {$a_{23}$};
	\node[] at (5,.3) {$a_{43}$};
	%\node[] at (5,-.3) {$a_{34}$};
%OUTPUT 
	\draw (6.69,.69) circle (0.07);	
	 \draw[-] (6.65, .65 ) -- (6.22, .22);
% Input 
	 \draw[->] (-.65, .65) -- (-.25, .25);	
   	 \node[] at (-.8,.8) {in};
% Leaks
	% \draw[->] (0,-.3) -- (0, -.8);	
   	 %\node[] at (.35, -.5) {$a_{01}$};
	 \draw[->] (4.2,-.3) -- (4.8, -.9);	
   	 \node[] at (4.2, -.7) {$a_{53}$};
   	 \draw[->] (5.2,-.9) -- (5.8,-.3);
   	 \node[] at (5.8,-.7) {$a_{45}$};
	 %\draw[->] (4,.3) -- (4, .8);	
   	 %\node[] at (4.3, .5) {$a_{03}$};
	 %\draw[->] (6,.3) -- (6, .8);	
   	 %\node[] at (6.3, .5) {$a_{04}$};
 % BOX
\draw (-1.5,-2) rectangle (7, 2);
% LABEL: Model name
    	\node[] at (0, -1.5) {$\cm'$};
    	
%-----------------------------

\end{tikzpicture}
   \caption{Detour Example} 
    \label{fig:detour2}
\end{figure}

\begin{ex}
%Examples from Figure \ref{fig:detour2}. 
Consider the linear compartmental models from Figure \ref{fig:detour2} with compartmental matrices:

\[
A = \begin{bmatrix}
-a_{21} & 0 & 0 & 0 & 0 \\
a_{21} & -a_{32}-a_{52} & 0 & 0 & 0 \\
0 & a_{32} & -a_{43} & 0 & a_{35} \\
0 & 0 & a_{43} & 0 & 0 \\
0 & a_{52} & 0 & 0 & -a_{35}
\end{bmatrix}  \text{ and } B =\begin{bmatrix}
-a_{21} & 0 & 0 & 0 & 0 \\
a_{21} & -a_{32} & 0 & 0 & 0 \\
0 & a_{32} & -a_{43}-a_{53} & 0 & 0 \\
0 & 0 & a_{43} & 0 & a_{45} \\
0 & 0 & a_{53} & 0 & -a_{45}
\end{bmatrix} 
\]

%The determinants of $(\partial I - A)$ and $(\partial I - B)$ are:

%$$(\partial + a_{21}) (\partial + a_{32}+a_{52})(\partial + a_{43}) \partial (\partial + a_{35})$$

%and

%$$(\partial + a_{21}) (\partial + a_{32})(\partial + a_{43}+a_{53}) \partial (\partial + a_{45})$$

%We must also consider $(\partial I - A)^{(1,4)}$ and $(\partial I - B)^{(1,4)}$.  These are given by:

%\[
%(\partial I - A)^{(1,4)} = %\begin{bmatrix}
%-a_{21} & \partial +a_{32} + a_{52} & 0 & 0 \\
%0 & -a_{32} & \partial +a_{43} & -a_{35} \\
%0 & 0 & -a_{43} & 0 \\
%0 & -a_{52} & 0 & \partial + a_{35}
%\end{bmatrix}  \text{,}
%\]
%\[
%(\partial I - B)^{(1,4)} =\begin{bmatrix}
%-a_{21} & \partial + a_{32} & 0 & 0 \\
%0 & -a_{32} & \partial + a_{43} + a_{53} & 0 \\
%0 & 0 & -a_{43} & -a_{45} \\
%0 & 0 & -a_{53} & %\partial+a_{45}
%\end{bmatrix} 
%\]

%The determinants are given by:

The input-output equations are given by:
\begin{equation*}
    (\partial + a_{21}) (\partial + a_{32}+a_{52})(\partial + a_{43}) \partial (\partial + a_{35}) y_4 
    =(-a_{21}a_{32}a_{43}(\partial + a_{35}) - a_{21}a_{35}a_{43}a_{52}) u_1
\end{equation*}

% = -a_{21}a_{32}a_{43} \partial -a_{21}a_{32}a_{43}a_{35}-a_{52}a_{35}a_{21}a_{43}%$$
and
\begin{equation*}
(\partial + a_{21}) (\partial + a_{32})(\partial + a_{43}+a_{53}) \partial (\partial + a_{45})y_4 =
(-a_{21}a_{32}a_{43}(\partial + a_{45}) - a_{32}a_{45}a_{21}a_{53}) u_1 
\end{equation*}

%= -a_{21}a_{32}a_{43} \partial - a_{21}a_{32}a_{43}a_{45}-a_{21}a_{53}a_{45}a_{32}$$

These two models are indistinguishable via the following map:

\[
\begin{pmatrix}
a_{21} \\ a_{32} \\ a_{43} \\ a_{35} \\ a_{52}
\end{pmatrix} \mapsto 
\begin{pmatrix}
a_{32} \\ a_{43} \\ a_{21} \\ a_{45} \\ a_{53}
\end{pmatrix}
\]

\end{ex}

\begin{figure} 
	\begin{tikzpicture}[scale=.9]
%-----------------------------
% M/G
%-----------------------------
 	\draw (0,0) circle (0.3);
 	\draw (2,0) circle (0.3);
 	\draw (4,0) circle (0.3);
 	\draw (6,0) circle (0.3);
% LABEL: numbers
    	\node[] at (0, 0) {1};
    	\node[] at (2, 0) {2};
    	\node[] at (4, 0) {$3$};
    	\node[] at (6, 0) {$4$};
% ARROWS
	 \draw[->] (0.35, .1) -- (1.65, .1);
	 \draw[->] (2.35,.1) -- (3.65,.1);
	 %\draw[<-] (2.35,-.1) -- (3.65,-.1);
	 \draw[->] (4.35,.1) -- (5.65,.1);
	 %\draw[<-] (4.35,-.1) -- (5.65,-.1);
% RATES
   	 \node[] at (1, 0.3) {$a_{21}$};
	\node[] at (3,0.3) {$a_{32}$};
	%\node[] at (3,-.3) {$a_{23}$};
	\node[] at (5,.3) {$a_{43}$};
	%\node[] at (5,-.3) {$a_{34}$};
%OUTPUT 
	\draw (6.69,.69) circle (0.07);	
	 \draw[-] (6.65, .65 ) -- (6.22, .22);
% Input 
	 \draw[->] (-.65, .65) -- (-.25, .25);	
   	 \node[] at (-.8,.8) {in};
% Leaks
	 %\draw[->] (0,.3) -- (0, .8);	
   	 %\node[] at (-.3, .5) {$a_{01}$};
%	 \draw[->] (2,-.3) -- (2, -.8);	
%   	 \node[] at (2.35, -.5) {$a_{02}$};
	 %\draw[->] (4,.3) -- (4, .8);	
   	 %\node[] at (4.3, .5) {$a_{03}$};
	 %\draw[->] (6,.3) -- (6, .8);	
   	 %\node[] at (6.3, .5) {$a_{04}$};
 % BOX
%\draw (-1.5,-3) rectangle (7, 2);
% LABEL: Model name
%    	\node[] at (3, -1.5) {$\cm_2$};

\draw[->] (.25,.25) -- (1.8,1.8);
\draw[->] (2.2,1.8) -- (3.8,.25);
\node[] at (2,2) {$D$};
\node[] at (.7,1.2) {$a_{s1}$};
\node[] at (3.3,1.2) {$a_{3t}$};

\node[] at (5,2) {$\cm$};

%-----------------------------

%-----------------------------
% M/G
%-----------------------------
 	\draw (0,-2) circle (0.3);
 	\draw (2,-2) circle (0.3);
 	\draw (4,-2) circle (0.3);
 	\draw (6,-2) circle (0.3);
% LABEL: numbers
    	\node[] at (0, -2) {1};
    	\node[] at (2, -2) {2};
    	\node[] at (4, -2) {$3$};
    	\node[] at (6, -2) {$4$};
% ARROWS
	 \draw[->] (0.35, -1.9) -- (1.65, -1.9);
	 \draw[->] (2.35,-1.9) -- (3.65,-1.9);
	 %\draw[<-] (2.35,-.1) -- (3.65,-.1);
	 \draw[->] (4.35,-1.9) -- (5.65,-1.9);
	 %\draw[<-] (4.35,-.1) -- (5.65,-.1);
% RATES
   	 \node[] at (1, -1.7) {$a_{21}$};
	\node[] at (3,-1.7) {$a_{32}$};
	%\node[] at (3,-.3) {$a_{23}$};
	\node[] at (5,-1.7) {$a_{43}$};
	%\node[] at (5,-.3) {$a_{34}$};
%OUTPUT 
	\draw (6.69,-1.31) circle (0.07);	
	 \draw[-] (6.65, -1.35 ) -- (6.22, -1.78);
% Input 
	 \draw[->] (-.65, -1.35) -- (-.25, -1.75);	
   	 \node[] at (-.8,-1.2) {in};
% Leaks
	 %\draw[->] (0,.3) -- (0, .8);	
   	 %\node[] at (-.3, .5) {$a_{01}$};
%	 \draw[->] (2,-.3) -- (2, -.8);	
%   	 \node[] at (2.35, -.5) {$a_{02}$};
	 %\draw[->] (4,.3) -- (4, .8);	
   	 %\node[] at (4.3, .5) {$a_{03}$};
	 %\draw[->] (6,.3) -- (6, .8);	
   	 %\node[] at (6.3, .5) {$a_{04}$};
% LABEL: Model name
%    	\node[] at (3, -1.5) {$\cm_2$};

\draw[->] (2.25,-2.25) -- (3.8,-3.8);
\draw[->] (4.2,-3.8) -- (5.8,-2.25);
\node[] at (4,-4) {$D$};
\node[] at (2.7,-3.2) {$a_{s2}$};
\node[] at (5.3,-3.2) {$a_{4t}$};

\node[] at (1,-4) {$\cm'$};

\draw[->,red,thick,dashed] (1,0) -- (2.7,-1.5);
\draw[->,red,thick,dashed] (3,0) -- (4.7,-1.5);
\draw[->,red,thick,dashed] (5,0) -- (1.3,-1.5);
\node[] at (1,-.75) {$\color{red}\Phi$};

 % BOX
\draw (-1.5,-5) rectangle (7, 3);

\end{tikzpicture}
\caption{An example map $\Phi$ for two path models with detours}
\label{fig:detour1}
\end{figure}
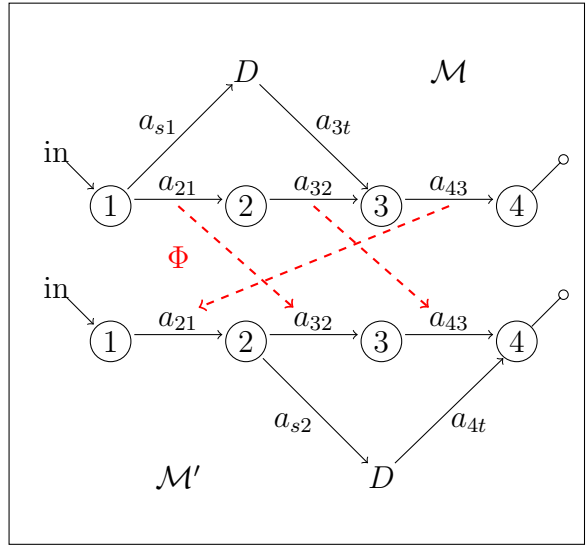

{  We now generalize our result, as in Figure \ref{fig:detour1}.}

\begin{thm} \label{thm:detour}
Let $P_n$ be the directed path graph from $1$ to $n$, and let $D$ be some connected directed graph.  Then, for some $1 \leq i\ {  \leq }\ j \leq n-1$ { with $i<n-1$} and $s,t \in V(D)$, the \textit{detour model} 
\[\cm = (P_n \cup D \cup \{i \to s\} \cup \{t \to j\} , \{1\},\{n\},Leak)\] 
is indistinguishable to the model 
\[\cm' = (P_n \cup D \cup \{(i+1) \to s\} \cup \{t \to (j+1)\} , \{1\},\{n\},Leak)\]
\end{thm}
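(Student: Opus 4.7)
The plan is to exhibit an explicit parameter bijection $\Phi$ from $\cm$ to $\cm'$ that renames the input-output equation of $\cm$ into that of $\cm'$, establishing permutation indistinguishability. I focus on the case $i<j$ and $Leak \cap \{i,i+1,j,j+1\}=\emptyset$; the case $i=j$ requires only a small variant of the same block decomposition, and leaks landing inside the shifted pair are handled by augmenting $\Phi$ with the appropriate leak swap.

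The key structural observation is that the compartmental matrix $A$ of $\cm$ is \emph{block lower triangular} with respect to the ordered vertex partition
\[
\{1,\ldots,i\},\quad \{i+1,\ldots,j\}\cup V(D),\quad \{j+1,\ldots,n\},
\]
since every edge of the graph moves forward along this ordering: path edges $k\to k+1$, the detour entry $i\to s$ landing in the middle block, and the detour exit $t\to j$ internal to the middle block. Within the middle block $U_C$, the only edge that could cross from the path sub-part to the $D$ sub-part is $i\to s$, whose source $i$ lies \emph{outside} $C$; hence $U_C$ is itself block upper triangular. By Theorem \ref{thm:ioscc}, the LHS factors as
\[
\det(\partial I - A)\;=\;\bigl(\partial + a_{(i+1)i} + a_{si} + \ell_i\bigr)\,\partial\,\prod_{k\in [n-1]\setminus\{i\}}\bigl(\partial + a_{(k+1)k} + \ell_k\bigr)\,\det(\partial I - A_D^\cm),
\]
where $\ell_k$ denotes the leak parameter at $k$ (zero if $k\notin Leak$) and $A_D^\cm$ is the $V(D)\times V(D)$ sub-block of $A$, whose $(t,t)$-entry incorporates the outflow $a_{jt}$. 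The analogous formula for $\cm'$ differs only by promoting the distinguished detour factor from vertex $i$ to vertex $i+1$ and by replacing $a_{jt}$ with $a_{(j+1)t}$ inside $A_D$.

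Now define $\Phi$ to swap the two adjacent path parameters $a_{(i+1)i}\leftrightarrow a_{(i+2)(i+1)}$, send $a_{si}\mapsto a_{s(i+1)}$ and $a_{jt}\mapsto a_{(j+1)t}$, and fix every other parameter. Applied to the factorization, the distinguished factor $(\partial + a_{(i+1)i} + a_{si} + \ell_i)$ becomes $(\partial + a_{(i+2)(i+1)} + a_{s(i+1)} + \ell_i)$, matching the distinguished factor of $\cm'$ at vertex $i+1$, while the factor $(\partial + a_{(i+2)(i+1)} + \ell_{i+1})$ becomes $(\partial + a_{(i+1)i} + \ell_{i+1})$, matching $\cm'$'s ordinary factor at vertex $i$. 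Simultaneously, $\det(\partial I - A_D^\cm)$ is carried to $\det(\partial I - A_D^{\cm'})$ via $a_{jt}\mapsto a_{(j+1)t}$; all remaining factors are fixed. Hence $\Phi(\det(\partial I - A)) = \det(\partial I - A')$. For the right-hand side $(-1)^{1+n}\det(\partial I - A)^{1,n}\cdot u_1$, I would perform a cofactor expansion exploiting the residual block-triangular structure (only the first row of $U_A$ and the last column of $U_B$ are removed). This reduces the determinant to a signed sum over directed paths from compartment $1$ to compartment $n$: each path either traverses the backbone (contributing $\prod_{k}a_{(k+1)k}\cdot\det(\partial I - A_D^\cm)$) or detours through $D$ via $a_{si}$, a weighted $D$-path from $s$ to $t$, and $a_{jt}$. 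Each such contribution is carried by $\Phi$ to its $\cm'$-counterpart.

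The main obstacle will be the right-hand side verification, since removing row $1$ and column $n$ breaks the cleanest block factorization and one must carefully track how each backbone or detour path contributes---and how the cofactor signs align---to ensure $\Phi$ produces the correct $\cm'$-expression. A secondary concern is the handling of leaks that happen to fall within the shifted pair $\{i,i+1\}$ or $\{j,j+1\}$: in this case the simple swap of $\Phi$ needs to be augmented by additionally exchanging the corresponding leak parameters, since otherwise the extra $\ell_i$ and $\ell_{i+1}$ summands inside the distinguished factor would fail to match across the two models.
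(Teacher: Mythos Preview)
Your left-hand side argument is correct and, in fact, cleaner than the paper's: the block-triangular decomposition gives the factorization immediately, and your swap $a_{(i+1)i}\leftrightarrow a_{(i+2)(i+1)}$ correctly transports that product. The problem is that your bijection $\Phi$ is \emph{wrong on the right-hand side}. A single transposition of two adjacent path parameters is not enough; the paper's $\Phi$ cyclically shifts \emph{all} of the backbone parameters, $a_{(k+1)k}\mapsto a_{(k+2)(k+1)}$ for $k<n-1$ and $a_{n(n-1)}\mapsto a_{21}$, and that extra rotation is essential for the RHS.

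Here is a concrete counterexample to your $\Phi$. Take $n=5$, $i=1$, $j=3$, $D=\{6\}$ with $s=t=6$. Then the RHS operator of $\cm$ is
\[
a_{21}a_{32}a_{43}a_{54}(\partial+a_{36})\;+\;a_{43}a_{54}a_{36}a_{61}(\partial+a_{32}),
\]
while that of $\cm'$ is
\[
a_{21}a_{32}a_{43}a_{54}(\partial+a_{46})\;+\;a_{21}a_{54}a_{46}a_{62}(\partial+a_{43}).
\]
Your $\Phi$ sends the second term to $a_{43}a_{54}a_{46}a_{62}(\partial+a_{21})$, which is not the second term of $\cm'$. The reason is structural: the RHS contains a summand for each directed $1\to n$ walk, and the ``through-the-detour'' walk in $\cm$ avoids the backbone vertices $i{+}1,\dots,j{-}1$, contributing complementary factors $(\partial+a_{(k+1)k})$ for those $k$. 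In $\cm'$ the avoided set shifts to $i{+}2,\dots,j$. Only a full cyclic shift of the $a_{(k+1)k}$ aligns both the walk weights \emph{and} the complementary factors simultaneously; swapping just $a_{(i+1)i}$ and $a_{(i+2)(i+1)}$ handles the off-ramp vertex but leaves the on-ramp side and the bypassed segment mismatched. The paper establishes the RHS by expanding $(\partial I-A)^{1,n}$ along its last backbone row and $(\partial I-A')^{1,n}$ along its first backbone column, producing two submatrices that agree entrywise under the cyclic $\Phi$; your transposition does not make those submatrices agree.
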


%\begin{note} The statement should hold for exchanged!!!  I adjusted the proof in blue.  Also, note that the exchange does not work going from the $n-1$ node to the $n$ node.  I don't know if I need to justify that or not, but it's not that surprising.
%\end{note}

\begin{proof}

We claim the following map between the coefficients of $\cm$ and $\cm'$ yields indistinguishability between these two models:
%\[
%\Phi(a_{uv}) = \begin{cases}
%a_{(u+1)(v+1)}, \text{ if } i \leq v < u \leq  j & (\text{on the detoured path}) \\
%a_{(i+1)i}, \text{ if } u=j+1 \text{ and } v = j & (\text{first edge after detour}) \\
%a_{uv}, \text{ if } v<i \text{ or } v>j & (\text{on $P_n$, but not on detoured path}) \\
%a_{s(i+1)}, \text{ if } u = s \text{ and } v = i & (\text{off-ramp off of path}) \\
%a_{(j+1)t}, \text{ if } u = j \text{ and } v = t & (\text{on-ramp back to path}) \\
%a_{uv}, \text{ if } u,v \in D & (\text{on detour}) 
%\end{cases}
%\]

%Alternatively, we can break down the map $\Phi$ into two cases:  
%If $i>1$, we simply swap the parameters leaving $i$ and $i+1$ and change the on-ramp and off-ramp parameters accordingly:
%\[
%\Phi(a_{uv}) = \begin{cases}
%a_{(u+1)(v+1)}, \text{ if } v=i \text{ and } u=i+1 & (\text{first edge on $P_n$ after detour})\\
%a_{(u-1)(v-1)}, \text{ if } v=i+1 \text{ and } u=i+2& (\text{second edge on $P_n$ after detour}) \\
%a_{u(v+1)}, \text{ if } u = s \text{ and } v = i & (\text{off-ramp off of path}) \\
%a_{(u+1)v}, \text{ if } u = j \text{ and } v = t & (\text{on-ramp back to path}) \\
%a_{uv}, \text{ otherwise }  & (\text{on detour or on path not already taken care of}) 
%\end{cases}
%\]
%and if $i=1$, we permute all the edges along the path $P_n$ and change the on-ramp and off-ramp parameters accordingly:
%Alternatively, we have the following map $\Phi$, which permutes all the edges along the path $P_n$ and changes the on-ramp and off-ramp parameters accordingly:
\[
\Phi(a_{uv}) = \begin{cases}
a_{(u+1)(v+1)}, \text{ if } u=v+1 \text{ and } u < n & (\text{edges along $P_n$ not including last edge})\\
a_{21}, \text{ if } u=n \text{ and } v=n-1& (\text{last edge on $P_n$}) \\
%a_{u(v+1)} 
a_{s(i+1)}, 
\text{ if } u = s \text{ and } v = i & (\text{off-ramp off of path}) \\
%a_{(u+1)v} 
a_{(j+1)t}, 
\text{ if } u = j \text{ and } v = t & (\text{on-ramp back to path}) \\
a_{uv}, \text{ otherwise }  & (\text{on detour}) 
\end{cases}
\]

%Notice this map is slightly different than yours, as it permutes ALL of the edges along the path $P_n$, not just the ones between $i$ and $j+1$.

%{\color{red}
%Cash: I like this better!  I think this is a lot more concise than what I was trying to do!  Thank you! 
%}

First, we construct the compartmental matrix of the detour model, which we call $A$, using the compartment matrices of the elements of the detour model.  Let $A_D$ be the compartmental matrix of the underlying detour with an additional $a_{jt}$ in the $j^{\text{th}}$ diagonal sum.  Then, the compartmental matrix of the detour model $\cm$ can be generated as a block matrix as follows:
\[
\partial I - A = \left[\begin{array}{cccccc|c}
      \partial + a_{21}  &0 &\cdots &\cdots & 0 & 0& \mathbf{0} \\
      -a_{21}  & \ddots&\ddots &&\vdots&\vdots &\mathbf{0}  \\
      0  & \ddots & \partial + a_{si}+a_{(i+1)i} & \ddots &\vdots &\vdots &\mathbf{0} \\
      \vdots  &\ddots &\ddots  & \ddots & 0 & 0& -a_{jt}\\
      \vdots  & & \ddots  & \ddots & \partial + a_{n(n-1)} & 0 & \mathbf{0} \\
      0    & \cdots & \cdots & 0 & -a_{n(n-1)} & \partial &\mathbf{0}\\
     \hline
     \mathbf{0}  & \mathbf{0} & -a_{si} & \mathbf{0} & \mathbf{0} &\mathbf{0} & \partial I -  A_D
\end{array}
\right]
\]
where the top-right block only contains the element $a_{jt}$ (in row $j$ and column $t$) corresponding to the edge from the detour back onto the path, and similarly the bottom-left block only contains the element $a_{si}$ (in row $s$ and column $i$) corresponding to the edge from the path to the detour, with all other entries being zero.

The second detour model $\cm'$ has compartmental matrix of the form:
\[
\partial I - A' = \left[\begin{array}{cccccc|c}
      \partial + a_{21}  &0 &\cdots &\cdots & 0 & 0& \mathbf{0} \\
      -a_{21}  & \ddots&\ddots &&\vdots&\vdots &\mathbf{0}  \\
      0  & \ddots & \partial + a_{s(i+1)}+a_{(i+2)(i+1)} & \ddots &\vdots &\vdots & \mathbf{0} \\
      \vdots  &\ddots &\ddots  & \ddots & 0 & 0& -a_{(j+1)t}\\
      \vdots  & & \ddots  & \ddots & \partial + a_{n(n-1)} & 0 & \mathbf{0} \\
      0    & \cdots & \cdots & 0 & -a_{n(n-1)} & \partial  &\mathbf{0}\\
     \hline
     \mathbf{0}  & \mathbf{0} & -a_{s(i+1)} & \mathbf{0} & \mathbf{0} &\mathbf{0} &  \partial I - A_D'
\end{array}
\right]
\]
According to Equation \ref{eq:io-det}, the coefficients of the left-hand side of the input-output equation of $\cm$ and $\cm'$ come from the determinants $\det(\partial I - A)$ and $\det(\partial I - A')$ respectively. Note that in both determinants, we can expand along the $n^{\text{th}}$ (the last column in the upper-left block), which only contains a $\partial$ yielding:

\begin{equation}\label{eq:lhsA}
\det(\partial I - A) =\partial \det\left( \begin{array}{ccccc|c}
      \partial + a_{21}  &0 &\cdots &\cdots & 0 & \mathbf{0} \\
      -a_{21}  & \ddots&\ddots &&\vdots &\mathbf{0}  \\
      0  & \ddots & \partial + a_{si}+a_{(i+1)i} & \ddots &\vdots  &\mathbf{0} \\
      \vdots  &\ddots &\ddots & \ddots & 0 &  -a_{jt}\\
      0  &\cdots & 0  & -a_{(n-1)(n-2)} & \partial + a_{n(n-1)} & \mathbf{0} \\
     \hline
     \mathbf{0}  & \mathbf{0} & -a_{si} & \mathbf{0} & \mathbf{0}  & \partial I -  A_D
\end{array}
\right)
\end{equation}
and 
\begin{equation}\label{eq:lhsAprime}
\det(\partial I - A') =\partial \det\left( \begin{array}{ccccc|c}
      \partial + a_{21}  &0 &\cdots &\cdots & 0 & \mathbf{0} \\
      -a_{21}  & \ddots&\ddots &&\vdots &\mathbf{0}  \\
      0  & \ddots & \partial + a_{s(i+1)}+a_{(i+2)(i+1)} & \ddots &\vdots  &\mathbf{0} \\
      \vdots  &\ddots &\ddots  & \ddots & 0 &  -a_{(j+1)t}\\
      0  &\cdots & 0  & -a_{(n-1)(n-2)} & \partial + a_{n(n-1)} & \mathbf{0} \\
     \hline
     \mathbf{0}  & \mathbf{0} & -a_{s(i+1)} & \mathbf{0} & \mathbf{0}  & \partial I -  A_D'
\end{array}
\right).
\end{equation}

Expanding down the $n-1^{\text{st}}$ column of the determinant in Equation \ref{eq:lhsA} and the first row in Equation \ref{eq:lhsAprime}, we get
\begin{equation}\label{eq:lhsAshifted2}
\det(\partial I - A) =\partial(\partial + a_{n(n-1)}) \det\left( \begin{array}{ccccc|c}
       \partial +a_{21} &0 &\cdots & \cdots & 0 & \mathbf{0} \\
      -a_{21} & \ddots&\ddots &  & \vdots  &\mathbf{0}  \\
      0  & \ddots & \partial + a_{si}+a_{(i+1)i} & \ddots &\vdots  &\mathbf{0} \\
      \vdots  &\ddots &\ddots  & \ddots & 0 &  -a_{jt}\\
      0  &\cdots & 0  & -a_{(n-2)(n-3)} & \partial + a_{(n-1)(n-2)} & \mathbf{0} \\
     \hline
     \mathbf{0}  & \mathbf{0} & -a_{si} & \mathbf{0} & \mathbf{0}  & \partial I -  A_D
\end{array}
\right)
\end{equation}
and
\begin{equation}\label{eq:lhsAprime2}
\det(\partial I - A') =\partial(\partial + a_{21}) \det\left( \begin{array}{ccccc|c}
       \partial +a_{32} &0 &\cdots &\cdots & 0 & \mathbf{0} \\
      -a_{32}  & \ddots&\ddots &&\vdots &\mathbf{0}  \\
      0  & \ddots & \partial + a_{s(i+1)}+a_{(i+2)(i+1)} & \ddots &\vdots  &\mathbf{0} \\
      \vdots  &\ddots &\ddots  & \ddots & 0 &  -a_{(j+1)t}\\
      0  &\cdots & 0  & -a_{(n-1)(n-2)} & \partial + a_{n(n-1)} & \mathbf{0} \\
     \hline
     \mathbf{0}  & \mathbf{0} & -a_{s(i+1)} & \mathbf{0} & \mathbf{0}  & \partial I -  A_D'
\end{array}
\right).
\end{equation}

%{\color{magenta} I re-worded this a little due to the slight abuse of notation.  See the remark after Definition 3.11.}
 Note that under the map $\Phi$, the entries of the matrix in the determinant in Equation \ref{eq:lhsAshifted2} are exactly the entries of matrix in the determinant in Equation \ref{eq:lhsAprime2}.  Also, since $\Phi(a_{n(n-1)})=a_{21}$, then $\Phi$ applied to the coefficients of $\det(\partial I -A))$ gives $\det(\partial I - A')$, meaning the coefficients of the derivatives of $y_n$ are indistinguishable under $\Phi$.

%WAIT, ISN'T THIS JUST UPPER TRIANGULAR?  THE BOTTOM LEFT QUADRANT OF THIS BLOCK MATRIX IS NECESSARILY ALL ZERO SINCE IT CAN ONLY CONTAIN EDGES WHICH LEAVE THE PATH AND GO ONTO THE DETOUR, BUT THE ONLY EDGE WHICH DOES THAT IS IN $C$.
%{\color{red}
%NOTE:  I THINK WE NEED TO MAKE THE CONVENTION THAT $t=n+1$, SO THAT IT WILL BE THE FIRST COLUMN AFTER OUR PATH COMPARTMENT MATRIX (IT PROBABLY DOESN'T ACTUALLY MATTER, ESPECIALLY IF WE WANT TO THINK ABOUT GENERALIZING).
%}

Now we consider the coefficients of the derivatives of $u_1$ which, again according to Equation \ref{eq:io-det}, come from the determinants of $(\partial I - A)^{(1,n)}$ and $(\partial I - A')^{(1,n)}$, respectively.

We split our argument into four cases: 
\begin{itemize}
    \item $i=1$ and $j>1$
    \item $i=1$ and $j=1$
    \item $i=2$
    \item $i \geq 3$
\end{itemize}
For the case where $i \geq 3$ we have:

\[
(\partial I - A)^{1,n} = \left[\begin{array}{ccccc|c}
      -a_{21}  & \partial + a_{32} &0 &\cdots &0 &\mathbf{0}  \\
      0  & \ddots & \partial + a_{si}+a_{(i+1)i} & \ddots &\vdots  &\mathbf{0} \\
      \vdots  &\ddots &\ddots  & \ddots & 0 &  -a_{jt}\\
      \vdots  & & \ddots  & \ddots & \partial + a_{n(n-1)} &  \mathbf{0} \\
      0    & \cdots & \cdots & 0 & -a_{n(n-1)}  &\mathbf{0}\\
     \hline
     \mathbf{0}  & \mathbf{0} & -a_{si} & \mathbf{0} & \mathbf{0} & \partial I -  A_D
\end{array}
\right]
\]

and

\[
(\partial I - A')^{1,n} = \left[\begin{array}{ccccc|c}
      -a_{21}  & \partial + a_{32} &0 & \cdots & 0 &\mathbf{0}  \\
      0  & \ddots & \partial + a_{s(i+1)}+a_{(i+2)(i+1)} & \ddots &\vdots  &\mathbf{0} \\
      \vdots  &\ddots & \ddots & \ddots & 0 &  -a_{(j+1)t}\\
      \vdots  & & \ddots  & \ddots & \partial + a_{n(n-1)} & \mathbf{0} \\
      0    & \cdots & \cdots & 0 & -a_{n(n-1)}   &\mathbf{0}\\
     \hline
     \mathbf{0}  & \mathbf{0} & -a_{s(i+1)} & \mathbf{0} & \mathbf{0}  &  \partial I - A_D'
\end{array}
\right]
\]
Expanding along the $n-1^{\text{st}}$ row (last row in the top-left block) in $(\partial I - A)^{1,n}$ and the first column in $(\partial I - A')^{1,n}$ we get

\[
\det(\partial I - A)^{1,n} = -a_{n(n-1)} \det\left(\begin{array}{ccccc|c}
      -a_{21}  & \partial + a_{32} &0 &\cdots &0 &\mathbf{0}  \\
      0  & \ddots & \partial + a_{si}+a_{(i+1)i} & \ddots &\vdots  &\mathbf{0} \\
      \vdots  &\ddots & \ddots  & \ddots & 0 &  -a_{jt}\\
      \vdots  & & \ddots  & \ddots & \partial + a_{(n-1)(n-2)} &  \mathbf{0} \\
      0    & \cdots & \cdots & 0 & -a_{(n-1)(n-2)}  &\mathbf{0}\\
     \hline
     \mathbf{0}  & \mathbf{0} & -a_{si} & \mathbf{0} & \mathbf{0} & \partial I -  A_D
\end{array}
\right)
\]

and

\[
\det(\partial I - A')^{1,n} = -a_{21}\det\left(\begin{array}{ccccc|c}
      -a_{32}  & \partial + a_{43} &0 & \cdots & 0 &\mathbf{0}  \\
      0  & \ddots & \partial + a_{s(i+1)}+a_{(i+2)(i+1)} & \ddots &\vdots  &\mathbf{0} \\
      \vdots  &\ddots &\ddots  & \ddots & 0 &  -a_{(j+1)t}\\
      \vdots  & & \ddots  & \ddots & \partial + a_{n(n-1)} & \mathbf{0} \\
      0    & \cdots & \cdots & 0 & -a_{n(n-1)}   &\mathbf{0}\\
     \hline
     \mathbf{0}  & \mathbf{0} & -a_{s(i+1)} & \mathbf{0} & \mathbf{0}  &  \partial I - A_D'
\end{array}
\right).
\]
Thus, by the same argument as for the coefficients of the derivatives of $y_n$, the coefficients of $u_1$ are the same up to parameter permutation, as described by $\Phi$. 

For the case $i=1$ { and $j>1$}, we can still expand along the $n-1^{st}$ row (last row in the top-left block) in $(\partial I - A)^{1,n}$ and the first column in $(\partial I - A')^{1,n}$, but our matrices have the form:

\[
\det(\partial I - A)^{1,n} = -a_{n(n-1)} \det\left(\begin{array}{ccccc|c}
      -a_{21}  & \partial + a_{32} &0 &\cdots &0 &\mathbf{0}  \\
      0  & \ddots & \partial + a_{43} & \ddots &\vdots  &\mathbf{0} \\
      \vdots  &\ddots & \ddots  & \ddots & 0 &  -a_{jt}\\
      \vdots  & & \ddots  & \ddots & \partial + a_{(n-1)(n-2)} &  \mathbf{0} \\
      0    & \cdots & \cdots & 0 & -a_{(n-1)(n-2)}  &\mathbf{0}\\
     \hline
     -a_{s1}  & \mathbf{0} & \mathbf{0} & \mathbf{0} & \mathbf{0} & \partial I -  A_D
\end{array}
\right)
\]

and

\[
\det(\partial I - A')^{1,n} = -a_{21}\det\left(\begin{array}{ccccc|c}
      -a_{32}  & \partial + a_{43} &0 & \cdots & 0 &\mathbf{0}  \\
      0  & \ddots & \partial + a_{54} & \ddots &\vdots  &\mathbf{0} \\
      \vdots  &\ddots &\ddots  & \ddots & 0 &  -a_{(j+1)t}\\
      \vdots  & & \ddots  & \ddots & \partial + a_{n(n-1)} & \mathbf{0} \\
      0    & \cdots & \cdots & 0 & -a_{n(n-1)}   &\mathbf{0}\\
     \hline
     -a_{s2}  & \mathbf{0} & \mathbf{0} & \mathbf{0} & \mathbf{0}  &  \partial I - A_D'
\end{array}
\right).
\]
Thus, the coefficients of $u_1$ are the same up to parameter permutation, as described by $\Phi$.

{ If $i=j=1$, we can still expand along the $n-1^{st}$ row (last row in the top-left block) in $(\partial I - A)^{1,n}$ and the first column in $(\partial I - A')^{1,n}$, but our matrices have the form:

\[
\det(\partial I - A)^{1,n} = -a_{n(n-1)} \det\left(\begin{array}{ccccc|c}
      -a_{21}  & \partial + a_{32} &0 &\cdots &0 &\mathbf{0}  \\
      0  & \ddots & \partial + a_{43} & \ddots &\vdots  &\mathbf{0} \\
      \vdots  &\ddots & \ddots  & \ddots & 0 &  \mathbf{0}\\
      \vdots  & & \ddots  & \ddots & \partial + a_{(n-1)(n-2)} &  \mathbf{0} \\
      0    & \cdots & \cdots & 0 & -a_{(n-1)(n-2)}  &\mathbf{0}\\
     \hline
     -a_{s1}  & \mathbf{0} & \mathbf{0} & \mathbf{0} & \mathbf{0} & \partial I -  A_D
\end{array}
\right)
\]

and

\[
\det(\partial I - A')^{1,n} = -a_{21}\det\left(\begin{array}{ccccc|c}
      -a_{32}  & \partial + a_{43} &0 & \cdots & 0 & \mathbf{0}  \\
      0  & \ddots & \partial + a_{54} & \ddots &\vdots  &\mathbf{0} \\
      \vdots  &\ddots &\ddots  & \ddots & 0 &  \mathbf{0}\\
      \vdots  & & \ddots  & \ddots & \partial + a_{n(n-1)} & \mathbf{0} \\
      0    & \cdots & \cdots & 0 & -a_{n(n-1)}   &\mathbf{0}\\
     \hline
     -a_{s2}  & \mathbf{0} & \mathbf{0} & \mathbf{0} & \mathbf{0}  &  \partial I - A_D'
\end{array}
\right).
\]
Thus, the coefficients of $u_1$ are the same up to parameter permutation, as described by $\Phi$.
}

Likewise, for the case $i=2$, we can still expand along the $n-1^{st}$ row (last row in the top-left block) in $(\partial I - A)^{1,n}$ and the first column in $(\partial I - A')^{1,n}$, but our matrices look like:

\[
\det(\partial I - A)^{1,n} = -a_{n(n-1)} \det\left(\begin{array}{ccccc|c}
      -a_{21}  & \partial + a_{32} + a_{s2} &0 &\cdots &0 &\mathbf{0}  \\
      0  & -a_{32} & \partial + a_{43}  &\ddots   &\vdots & \mathbf{0} \\
      \vdots  &\ddots & \ddots  & \ddots & 0 &  -a_{jt}\\
      \vdots  & & \ddots  & \ddots & \partial + a_{(n-1)(n-2)} &  \mathbf{0} \\
      0    & \cdots & \cdots & 0 & -a_{(n-1)(n-2)}  &\mathbf{0}\\
     \hline
     \mathbf{0}  & -a_{s2} & \mathbf{0} & \mathbf{0} & \mathbf{0} & \partial I -  A_D
\end{array}
\right)
\]

and

\[
\det(\partial I - A')^{1,n} = -a_{21}\det\left(\begin{array}{ccccc|c}
      -a_{32}  & \partial + a_{43} + a_{s3} &0 & \cdots & 0 &\mathbf{0}  \\
      0  & -a_{43} & \partial + a_{54} & \ddots &\vdots  &\mathbf{0} \\
      \vdots  &\ddots &\ddots  & \ddots & 0 &  -a_{(j+1)t}\\
      \vdots  & & \ddots  & \ddots & \partial + a_{n(n-1)} & \mathbf{0} \\
      0    & \cdots & \cdots & 0 & -a_{n(n-1)}   &\mathbf{0}\\
     \hline
     \mathbf{0}  & -a_{s3} & \mathbf{0} & \mathbf{0} & \mathbf{0}  &  \partial I - A_D'
\end{array}
\right).
\]

Thus, the coefficients of $u_1$ are the same up to parameter permutation, as described by $\Phi$.

%{  For $i=1$, let's try to generalize the argument in Example 5.3.}

\end{proof}

{ 
\begin{rmk}
    Note that in the previous proof, $i<n-1$ is clearly necessary if $i<j$, since if $i=n-1$, then $j=n$ and $j+1=n+1$ would be a compartment not on the skeletal path.  In the case when $i=j$, one could imagine that $i=j=n-1$ (an exchange in the $n-1$ node on the skeletal path), however the expansion of $\partial I - A'$ in Equation \ref{eq:lhsAprime} down the $n^{\text{th}}$ column no longer contains only one non-zero element.  Thus, $\partial I -A$ has a different form than $\partial I -A'$, resulting in two models which are \textbf{not} indistinguishable.
\end{rmk}

}

%\begin{note} Also by the map:

%\[
%\begin{pmatrix}
%a_{21} \\ a_{32} \\ a_{43} \\ a_{52} \\ a_{35}
%\end{pmatrix} \mapsto 
%\begin{pmatrix}
%a_{32} \\ a_{43} \\ a_{53} \\ a_{54} \\ a_{21}
%\end{pmatrix}
%\]

%\end{note}

\section{Sink Models} \label{section:sink}

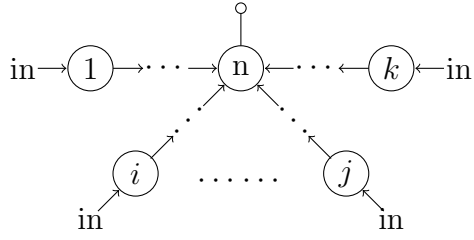
\begin{figure}
    \centering
    \begin{tikzpicture}[scale=1]
%-----------------------------
% M/G
%-----------------------------
 	\draw (0,0) circle (0.3);
 	\draw (-2,0) circle (0.3);
 	\draw (2,0) circle (0.3);
 	\draw (-1.4,-1.4) circle (0.3);
 	\draw (1.4,-1.4) circle (0.3);
 	\node[] at (-.3, -1.5) {$\cdots$};
 	\node[] at (.3, -1.5) {$\cdots$};

% LABEL: numbers
    	\node[] at (0, 0) {n};
    	\node[] at (-2, 0) {1};
    	\node[] at (-1.4, -1.4) {$i$};
    	\node[] at (1.4, -1.4) {$j$};
    	\node[] at (2,0) {$k$};
% ARROWS
	 \draw[->] (-1.7, 0) -- (-1.3, 0);
	 \node[] at (-1,0) {$\cdots$};
	 \draw[->] (-.7,0) -- (-.32,0);
	 \draw[->] (1.7, 0) -- (1.3, 0);
	 \node[] at (1,0) {$\cdots$};
	 \draw[->] (.7,0) -- (.32,0);
    \draw[->] (-1.2,-1.2) -- (-.9,-.9);
    \node at (-.85,-.85) {$\cdot$};
    \node at (-.7,-.7) {$\cdot$};
    \node at (-.55,-.55) {$\cdot$};
    \draw[->] (-.5,-.5) -- (-.22,-.22);
    \draw[->] (1.2,-1.2) -- (.9,-.9);
    \node at (.85,-.85) {$\cdot$};
    \node at (.7,-.7) {$\cdot$};
    \node at (.55,-.55) {$\cdot$};
    \draw[->] (.5,-.5) -- (.22,-.22);

%OUTPUT 
	\draw (0,.8) circle (0.07);	
	 \draw[-] (0, .3 ) -- (0, .73);
% Input 
	 \draw[->] (-2.7, 0) -- (-2.32, 0);	
   	 \node[] at (-2.9,0) {in};
	 \draw[->] (2.7, 0) -- (2.32, 0);	
   	 \node[] at (2.9,0) {in};
   	 
   	 \draw[->] (-1.9,-1.9) -- (-1.62,-1.62);
   	 \node at (-2,-2) {in};
   	 \draw[->] (1.9,-1.9) -- (1.62,-1.62);
   	 \node at (2,-2) {in};

 % BOX
%\draw (-1.5,-2) rectangle (7, 2);
% LABEL: Model name
    	
%-----------------------------

\end{tikzpicture}
    \caption{Basic sink model.}
    \label{fig:basicsink}
\end{figure}

We now extend the idea of indistinguishability of path models to a family of models called \textit{sink models}.  A sink model is defined as a combination of skeletal path models which all of have same output, as depicted in Figure \ref{fig:basicsink}.

\begin{cor} \label{cor:sink}
Suppose two sink models $\cm$ and $\cm'$ are identical outside of one path on each model which on their own are permutation indistinguishable, then $\cm$ and $\cm'$ are permutation indistinguishable.
\end{cor}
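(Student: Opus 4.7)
The plan is to produce the required bijection $\Phi$ of parameters by extending the bijection $\phi$ that witnesses permutation indistinguishability of the single distinct path, letting $\Phi$ act as $\phi$ on that path's parameters and as the identity on the parameters of every shared path. The crucial ingredient is that the input-output equation of a sink model factors cleanly as a product of contributions from the individual paths meeting at the sink vertex $n$.

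Order the vertices of $\cm$ by listing the non-sink vertices of $P_1$, then those of $P_2$, and so on, ending with $n$. Since the paths meet only at $n$ and (by Godfrey-Chapman Rule~4) there is no leak at $n$, the matrix $\partial I - A$ takes the block form
\[
\partial I - A \;=\; \begin{bmatrix} D & \mathbf{0} \\ r & \partial \end{bmatrix},
\]
where $D = \mathrm{diag}(B_1,\ldots,B_k)$ with $B_i$ the restriction of $\partial I - A^{(i)}$ (and $A^{(i)}$ the standalone compartmental matrix of the $i$-th path) to its non-sink vertices, and $r$ is the horizontal concatenation of row vectors $r_i$ recording the edges from the $i$-th path into $n$. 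Laplace expansion along the last column gives $\det(\partial I - A) = \partial \prod_i \det B_i$, so the left-hand operator of the sink input-output equation factors over the constituent paths.

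For each input $\ell_p$ in path $P_p$, I compute $\det(\partial I - A)^{\ell_p,n}$ by removing column $n$ and row $\ell_p$ and then permuting the last row upward to sit immediately below the (short) block obtained from $B_p$ by deleting its row indexed by $\ell_p$; this produces a block-diagonal matrix whose blocks are $B_i$ for $i \ne p$ together with an $s_p \times s_p$ block obtained by stacking $r_p$ beneath $B_p$ with row $\ell_p$ deleted. Hence
\[
\det(\partial I - A)^{\ell_p,n} \;=\; (\pm 1)\,C_p \prod_{i\ne p}\det B_i,
\]
where $C_p = \pm\det(\partial I - A^{(p)})^{\ell_p,n}$ is, up to a sign, the right-hand coefficient of $u_{\ell_p}$ in the input-output equation of the standalone path $P_p$, as seen by applying the same computation to $(P_p,\{\ell_p\},\{n\},\mathrm{Leak})$ in isolation. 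Parallel formulas hold for $\cm'$; the permutation signs agree between the two models because they share $n$, the sizes of all blocks, and the input positions.

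The hypothesis that the two distinct paths $P_p$ and $P_p'$ are permutation indistinguishable via $\phi$ yields $\phi(\det B_p) = \det B_{p'}$ and $\phi(C_p) = C_{p'}$, while the shared paths satisfy $B_i = B_i'$ and $C_q = C_q'$ for $i,q \ne p$, all fixed by $\Phi$. Substituting into the two displayed formulas shows that $\Phi$ sends the left-hand operator and every right-hand coefficient of the input-output equation of $\cm$ to the corresponding ones of $\cm'$, establishing permutation indistinguishability. The main technical obstacle is the factorization of $\det(\partial I - A)^{\ell_p,n}$ in the second step; this is dispatched by the explicit row-permutation argument sketched above, and because that permutation is determined only by the position of row $\ell_p$ and the block sizes—which coincide between $\cm$ and $\cm'$—the resulting signs cancel uniformly when matching coefficients.
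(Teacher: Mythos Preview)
Your overall strategy---extending the bijection $\phi$ on the distinguished path by the identity on the shared paths, and showing that both the left-hand operator and each right-hand coefficient of the sink input-output equation factor over the constituent paths---matches the paper's argument. In one respect you are more thorough: you treat the coefficient attached to every input $u_{\ell_q}$, whereas the paper writes out only the case $j=1$ (the input on the differing path) and leaves the other inputs implicit. The paper's ordering differs slightly (it places the sink vertex $n$ inside the block $A_1$, making $\partial I - A$ block upper triangular from the outset), but the idea is the same.

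There is, however, a genuine gap in your factorization of $\det(\partial I - A)^{\ell_p,n}$. After deleting column $n$ and row $\ell_p$ and permuting the bottom row $r$ upward to sit beneath the truncated $B_p$ block, the resulting matrix is \emph{not} block diagonal: the row $r$ records every edge into the sink, so it carries a nonzero entry $-a_{n\,j_i}$ in the column block of each path $P_i$, not only in the columns of $B_p$. The determinant nonetheless factors as you claim, but for a different reason: the $s_p$ columns of block $p$ have nonzero entries only in the $s_p-1$ surviving rows of $B_p$ and in the single row $r$, so a generalized Laplace expansion along those columns forces row $r$ into that minor and leaves the complementary minor equal to $\prod_{i\ne p}\det B_i$. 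Equivalently, permute both rows \emph{and} columns so that block $p$ comes last; the stray entries of $r$ then lie strictly below the diagonal blocks, and the matrix is block lower triangular. Either repair is short, but as written the clause ``this produces a block-diagonal matrix'' is false and the step does not stand on its own.
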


\begin{proof}
Let $A$ and $A'$ be the compartmental matrices corresponding to $\cm$ and $\cm'$ respectively.  Let $A_1$ be the compartmental matrix of the path model in $\cm$ which differs from the corresponding path in $\cm'$, but is indistinguishable.  Let $A_1'$ be the corresponding compartmental matrix in $\cm'$.
\[
A = \left[
\begin{array}{ccccc}
    A_1 & * & * & * & * \\
    0 & A_2 & 0 & \cdots  & 0 \\
    0 & 0 & A_3 & \ddots & \vdots  \\
    \vdots  &  & \ddots  & \ddots  & 0\\
    0 & \cdots  & \cdots & 0& A_k
\end{array}
\right]
\]
Note that each $*$ in the matrix above corresponds to a single $a_{ni}$ in the $n^{\text{th}}$ row of $A$ corresponding to the edge which connects each of the paths corresponding to $A_2,A_3,\ldots , A_k$ to the output, $n$.

Similarly, the compartmental matrix corresponding to $\cm'$ is

\[
A' = \left[
\begin{array}{ccccc}
    A_1' & * & * & * & * \\
    0 & A_2' & 0 & \cdots & 0 \\
    0 & 0 & A_3' & \ddots & \vdots  \\
    \vdots  &  & \ddots  & \ddots  & 0\\
    0 & \cdots  & \cdots & 0& A_k'
\end{array} \right]
=
\left[
\begin{array}{ccccc}
    A_1 '& * & * & * & * \\
    0 & A_2 & 0 & \cdots & 0 \\
    0 & 0 & A_3 & \ddots & \vdots  \\
    \vdots  &  & \ddots  & \ddots  & 0\\
    0 & \cdots  & \cdots & 0& A_k
\end{array}
\right]
\]
Note that each of the paths not corresponding to $A_1$ or $A_1'$ in $\cm$ and $\cm'$ respectively are exactly the same, hence $A_i=A_i'$ for each of the $i\geq 2$.

Thus, the coefficients of the left-hand side of the all $k$ of the input-output equations of $\cm$ and $\cm'$ respectively are given by
\[\det(\partial I - A) = \det(\partial I - A_1) \det(\partial I - A_2) \cdots \det(\partial I - A_k)  \]
and 
\begin{align*}
\det(\partial I - A') &= \det(\partial I - A_1') \det(\partial I - A_2') \cdots \det(\partial I - A_k') 
\\
&=\det(\partial I - A_1') \det(\partial I - A_2) \cdots \det(\partial I - A_k) 
\end{align*}

Since the models corresponding to $A_1$ and $A_1'$ are indistinguishable, then $\det(\partial I - A_1)$ and $\det(\partial I - A_1')$ are identical up to permutation of parameters.  Thus, $\det(\partial I - A)$ and $\det(\partial I - A')$ are identical up to permutation of parameters.

Similarly, the coefficients for each of the derivatives of $u_{j}$ on the right-hand side of each of the input-output equations of $\cm$ and $\cm'$ is generated by 
\begin{align*}
\det(\partial I - A)^{j,n}.
\end{align*}
For $j=1$ corresponding to the input in the path model in $\cm$ which is different but indistinguishable from the path model in $\cm'$, we get 
\begin{align*}
\det(\partial I - A)^{1,n} &= \det(\partial I - A_1)^{1,n} \det(\partial I - A_2) \cdots \det(\partial I - A_k) 
\end{align*}
since the first row and $n^{\text{th}}$ column are both in $A_1$.
Again, for the $j=1$ coefficient on the right-hand side of $\cm'$ we get
\begin{align*}
\det(\partial I - A')^{(1,n)} &= \det(\partial I - A_1')^{1,n} \det(\partial I - A_2) \cdots \det(\partial I - A_k) 
\end{align*}
Similar to the argument for the left-hand side coefficients, since the models corresponding to $A_1$ and $A_1'$ are indistinguishable, then $\det(\partial I - A_1)^{(1,n)}$ and $\det(\partial I - A_1')$ are identical up to permutation of parameters.  Thus, $\det(\partial I - A)$ and $\det(\partial I - A')$ are identical up to permutation of parameters.

\end{proof}

\begin{ex} 
Sink model examples from Figure \ref{fig:sink}. 
Consider the linear compartmental models with compartmental matrices:
\[
A = \begin{bmatrix}
-a_{21}-a_{41} & 0 & 0 & 0 & 0 \\
a_{21} & -a_{32} & 0 & a_{24} & 0 \\
0 & a_{32} & 0 & 0 & a_{35} \\
a_{41} & 0 & 0 & -a_{24} & 0 \\
0 & 0 & 0 & 0 & -a_{35}
\end{bmatrix}  \text{ and } A' =\begin{bmatrix}
-a_{21} & 0 & 0 & 0 & 0 \\
a_{21} & -a_{32}-a_{42} & 0 & 0 & 0 \\
0 & a_{32} & 0 & a_{34} & a_{35} \\
0 & a_{42} & 0 & -a_{34} & 0 \\
0 & 0 & 0 & 0 & -a_{35}
\end{bmatrix} 
\]

The input-output equations are given by:
\begin{multline*}
(\partial + a_{21} + a_{41}) (\partial + a_{32})\partial(\partial + a_{24})  (\partial + a_{35}) y_3 = \\
(-a_{21}a_{32}(\partial + a_{24})(\partial + a_{35})+a_{41}a_{35}a_{32}a_{24})u_1 + (\partial + a_{21} + a_{41})(\partial + a_{32})a_{35}a_{24}u_5
\end{multline*}

and
\begin{multline*}
(\partial + a_{21}) (\partial + a_{32} + a_{42})\partial(\partial + a_{34})  (\partial + a_{35}) y_3 = \\
(-a_{21}a_{32}(\partial + a_{34})(\partial + a_{35})+a_{42}a_{35}a_{21}a_{34})u_1 + (\partial + a_{21})(\partial + a_{32} + a_{42})a_{35}a_{34}u_5
\end{multline*}

\end{ex}

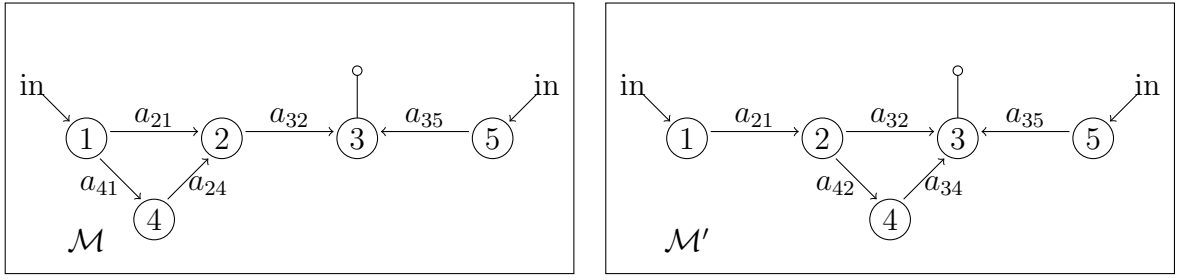
\begin{figure} 
    \centering
\begin{tikzpicture}[scale=.9]
%-----------------------------
% M/G
%-----------------------------
 	\draw (0,0) circle (0.3);
 	\draw (2,0) circle (0.3);
 	\draw (4,0) circle (0.3);
 	\draw (6,0) circle (0.3);
 	\draw (1,-1.2) circle (0.3);
% LABEL: numbers
    	\node[] at (0, 0) {1};
    	\node[] at (2, 0) {2};
    	\node[] at (4, 0) {$3$};
    	\node[] at (6, 0) {$5$};
    	\node[] at (1,-1.2) {$4$};
% ARROWS
	 \draw[->] (0.35, .1) -- (1.65, .1);
	 \draw[->] (2.35,.1) -- (3.65,.1);
	 %\draw[<-] (2.35,-.1) -- (3.65,-.1);
	 \draw[<-] (4.35,.1) -- (5.65,.1);
	 %\draw[<-] (4.35,-.1) -- (5.65,-.1);
% RATES
   	 \node[] at (1, 0.3) {$a_{21}$};
	\node[] at (3,0.3) {$a_{32}$};
	%\node[] at (3,-.3) {$a_{23}$};
	\node[] at (5,.3) {$a_{35}$};
	%\node[] at (5,-.3) {$a_{34}$};
%OUTPUT 
	\draw (4,1) circle (0.07);	
	 \draw[-] (4, .3 ) -- (4, .93);
% Input 
	 \draw[->] (6.65, .65) -- (6.25, .25);	
   	 \node[] at (6.8,.8) {in};
	 \draw[->] (-.65, .65) -- (-.25, .25);	
   	 \node[] at (-.8,.8) {in};
   	 
% Leaks
	% \draw[->] (0,-.3) -- (0, -.8);	
   	 %\node[] at (.35, -.5) {$a_{01}$};
	 \draw[->] (.2,-.3) -- (.8, -.9);	
   	 \node[] at (.2, -.7) {$a_{41}$};
   	 \draw[->] (1.2,-.9) -- (1.8,-.3);
   	 \node[] at (1.8,-.7) {$a_{24}$};
	 %\draw[->] (4,.3) -- (4, .8);	
   	 %\node[] at (4.3, .5) {$a_{03}$};
	 %\draw[->] (6,.3) -- (6, .8);	
   	 %\node[] at (6.3, .5) {$a_{04}$};
 % BOX
\draw (-1.2,-2) rectangle (7.2, 2);
% LABEL: Model name
    	\node[] at (0, -1.5) {$\cm$};
    	
%-----------------------------

\end{tikzpicture}$\quad$\begin{tikzpicture}[scale=.9]
%-----------------------------
% M/G
%-----------------------------
 	\draw (0,0) circle (0.3);
 	\draw (2,0) circle (0.3);
 	\draw (4,0) circle (0.3);
 	\draw (6,0) circle (0.3);
 	\draw (3,-1.2) circle (0.3);
% LABEL: numbers
    	\node[] at (0, 0) {1};
    	\node[] at (2, 0) {2};
    	\node[] at (4, 0) {$3$};
    	\node[] at (6, 0) {$5$};
    	\node[] at (3,-1.2) {$4$};
% ARROWS
	 \draw[->] (0.35, .1) -- (1.65, .1);
	 \draw[->] (2.35,.1) -- (3.65,.1);
	 %\draw[<-] (2.35,-.1) -- (3.65,-.1);
	 \draw[<-] (4.35,.1) -- (5.65,.1);
	 %\draw[<-] (4.35,-.1) -- (5.65,-.1);
% RATES
   	 \node[] at (1, 0.3) {$a_{21}$};
	\node[] at (3,0.3) {$a_{32}$};
	%\node[] at (3,-.3) {$a_{23}$};
	\node[] at (5,.3) {$a_{35}$};
	%\node[] at (5,-.3) {$a_{34}$};
%OUTPUT 
	\draw (4,1) circle (0.07);	
	 \draw[-] (4, .3 ) -- (4, .93);
% Input 
	 \draw[->] (6.65, .65) -- (6.25, .25);	
   	 \node[] at (6.8,.8) {in};
	 \draw[->] (-.65, .65) -- (-.25, .25);	
   	 \node[] at (-.8,.8) {in};
   	 
% Leaks
	% \draw[->] (0,-.3) -- (0, -.8);	
   	 %\node[] at (.35, -.5) {$a_{01}$};
	 \draw[->] (2.2,-.3) -- (2.8, -.9);	
   	 \node[] at (2.2, -.7) {$a_{42}$};
   	 \draw[->] (3.2,-.9) -- (3.8,-.3);
   	 \node[] at (3.8,-.7) {$a_{34}$};
	 %\draw[->] (4,.3) -- (4, .8);	
   	 %\node[] at (4.3, .5) {$a_{03}$};
	 %\draw[->] (6,.3) -- (6, .8);	
   	 %\node[] at (6.3, .5) {$a_{04}$};
 % BOX
\draw (-1.2,-2) rectangle (7.2, 2);
% LABEL: Model name
    	\node[] at (0, -1.5) {$\cm'$};
    	
%-----------------------------

\end{tikzpicture}
    \caption{Sink Example}
    \label{fig:sink}
\end{figure}

These two models are indistinguishable via the following map:
\[
\begin{pmatrix}
a_{21} \\ a_{24} \\ a_{32} \\ a_{41} \\ a_{35} 
\end{pmatrix} \mapsto 
\begin{pmatrix}
a_{32} \\ a_{34} \\ a_{21} \\ a_{42} \\ a_{35}
\end{pmatrix}
\]

%\[
%\begin{pmatrix}
%a_{21} \\ a_{24} \\ a_{32} \\ a_{41} \\ a_{35} 
%\end{pmatrix} \mapsto 
%\begin{pmatrix}
%a_{32} \\ a_{34} \\ a_{21} \\ a_{42} \\ a_{35}
%\end{pmatrix}
%\]

%{\color{red} lots of other maps too!}

\section{Source Models} \label{section:source}

%{\color{red} Put in analogous results for models with single source and multiple sinks, as opposed to models with single sink and multiple sources. Same proofs except $A$ becomes $A^T$ and instead of one input-output equation with multiple right hand side input terms there are $n$ input-output equations each with one right hand side input term.}

We now extend the idea of indistinguishability of path models to a family of models called \textit{source models}.  A source model is defined as a combination of skeletal path models which all of have same input, as depicted in Figure \ref{fig:basicsource}. %{\color{red} Can we put in the corresponding figure? DONE}

\begin{figure}
    \centering
    \begin{tikzpicture}[scale=1]
%-----------------------------
% M/G
%-----------------------------
 	\draw (0,0) circle (0.3);
 	\draw (-2,0) circle (0.3);
 	\draw (2,0) circle (0.3);
 	\draw (-1.4,-1.4) circle (0.3);
 	\draw (1.4,-1.4) circle (0.3);
 	\node[] at (-.3, -1.5) {$\cdots$};
 	\node[] at (.3, -1.5) {$\cdots$};

% LABEL: numbers
    	\node[] at (0, 0) {n};
    	\node[] at (-2, 0) {1};
    	\node[] at (-1.4, -1.4) {$i$};
    	\node[] at (1.4, -1.4) {$j$};
    	\node[] at (2,0) {$k$};
% ARROWS
	 \draw[<-] (-1.7, 0) -- (-1.3, 0);
	 \node[] at (-1,0) {$\cdots$};
	 \draw[<-] (-.7,0) -- (-.32,0);
	 \draw[<-] (1.7, 0) -- (1.3, 0);
	 \node[] at (1,0) {$\cdots$};
	 \draw[<-] (.7,0) -- (.32,0);
    \draw[<-] (-1.18,-1.18) -- (-.9,-.9);
    \node at (-.85,-.85) {$\cdot$};
    \node at (-.7,-.7) {$\cdot$};
    \node at (-.55,-.55) {$\cdot$};
    \draw[<-] (-.5,-.5) -- (-.22,-.22);
    \draw[<-] (1.18,-1.18) -- (.9,-.9);
    \node at (.85,-.85) {$\cdot$};
    \node at (.7,-.7) {$\cdot$};
    \node at (.55,-.55) {$\cdot$};
    \draw[<-] (.5,-.5) -- (.22,-.22);

%OUTPUT 
    \node at (0,1) {in};
    \draw[<-] (0, .35 ) -- (0, .8);

    \draw (-2.8,0) circle (0.07);
    \draw[-] (-2.73,0) -- (-2.3,0);
    \draw (2.8,0) circle (0.07);
    \draw[-] (2.73,0) -- (2.3,0);
     \draw[-] (-1.9,-1.9) -- (-1.62,-1.62);
     \draw (-1.95,-1.95) circle (0.07);
     \draw[-] (1.9,-1.9) -- (1.62,-1.62);
     \draw (1.95,-1.95) circle (0.07);
    
% Input 
    \node at (0,1) {in};
    \draw[<-] (0, .35 ) -- (0, .8);

 % BOX
%\draw (-1.5,-2) rectangle (7, 2);
% LABEL: Model name
    	
%-----------------------------

\end{tikzpicture}
    \caption{Basic source model.}
    \label{fig:basicsource}
\end{figure}
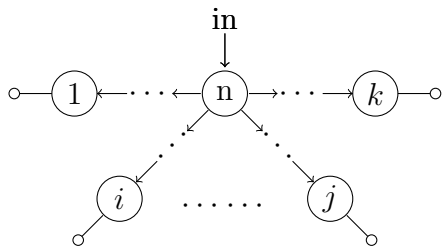

\begin{lemma} \label{lemma:swapinandout}
    The models $\cm=(G,S,\{i\},Leak)$ and $\cm'=(\tilde{G},\{i\},S,Leak)$, where the graph $\tilde{G}$ is the graph $G$ with the arrows reversed and $S$ is a subset of the vertex set $V$, have the same coefficient maps.
\end{lemma}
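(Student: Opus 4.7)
The plan is to show that the two compartmental matrices are transposes of one another under the natural parameter identification, and then translate this matrix relationship into equality of the polynomial coefficients of the input-output equations produced by Theorem~\ref{thm:ioscc}. To set this up, I would identify the weight $\tilde{a}_{ij}$ on the edge $j \to i$ of $\tilde{G}$ with the weight $a_{ji}$ on the reversed edge $i \to j$ of $G$; with this identification, and with leaks occurring at the same compartments in both models, I claim the compartmental matrix $A'$ of $\cm'$ equals $A^T$, where $A$ is the compartmental matrix of $\cm$. Verifying this matrix identity entry by entry, in particular checking that the diagonal entries (which bundle the sums of outgoing-edge weights with the leak terms) behave correctly after arrow reversal, is the step that requires the most care and is where I expect the main bookkeeping obstacle to lie; the off-diagonal entries are immediate from the identification $\tilde{a}_{ij} = a_{ji}$ together with the convention $A_{ij} = a_{ij}$ for $j \to i \in E$.

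Granting $A' = A^T$, the rest of the argument is a short determinantal manipulation. By Theorem~\ref{thm:ioscc} applied to $\cm = (G, S, \{i\}, Leak)$, there is a single input-output equation in $y_i$ whose left-hand-side coefficients are read off from $\det(\partial I - A)$ and whose coefficient of $u_s$ (for each $s \in S$ with a path to $i$) is $(-1)^{s+i}\det((\partial I - A)^{s,i})$. Applying Theorem~\ref{thm:ioscc} to $\cm' = (\tilde{G}, \{i\}, S, Leak)$ produces one input-output equation per output $s \in S$: its left-hand-side coefficients are read off from $\det(\partial I - A')$ and its coefficient of $u_i$ is $(-1)^{i+s}\det((\partial I - A')^{i,s})$.

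The left-hand sides agree for every $s$ because $\det(\partial I - A') = \det((\partial I - A)^T) = \det(\partial I - A)$, so all $|S|$ left-hand sides in $\cm'$ coincide with the single left-hand side in $\cm$. For the right-hand sides I would invoke the minor identity $(M^T)^{i,s} = (M^{s,i})^T$ with $M = \partial I - A$ to obtain
\[
\det\bigl((\partial I - A')^{i,s}\bigr) \;=\; \det\bigl(((\partial I - A)^{s,i})^T\bigr) \;=\; \det\bigl((\partial I - A)^{s,i}\bigr),
\]
and the signs $(-1)^{s+i}$ and $(-1)^{i+s}$ clearly agree. Pairing the coefficient of $u_s$ in $\cm$'s single equation with the coefficient of $u_i$ in $\cm'$'s equation for $y_s$ therefore gives a bijection between the two collections of coefficients under which the polynomials match, so the two coefficient maps coincide.
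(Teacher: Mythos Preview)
Your approach mirrors the paper's exactly: the paper asserts in one line that ``the matrix $A$ is now $A^T$ for the model $\cm'$,'' then uses $\det(\partial I-A^T)=\det(\partial I-A)$ for the left-hand sides and the transposed-minor identity for the right-hand sides, just as you do. Your explicit pairing of the $u_s$-coefficient in $\cm$ with the $u_i$-coefficient in the $y_s$-equation of $\cm'$, together with your tracking of the sign $(-1)^{i+s}$, are details the paper omits.

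Your hesitation about the diagonal entries is well placed, and in fact the asserted identity $A'=A^T$ fails under the natural identification $\tilde a_{ij}=a_{ji}$. The off-diagonals do match, but $(A^T)_{jj}=A_{jj}$ records the weights of \emph{out}-edges of $j$ in $G$, whereas $A'_{jj}$ records weights of out-edges of $j$ in $\tilde G$, i.e.\ \emph{in}-edges of $j$ in $G$; these differ whenever some vertex has distinct in- and out-neighborhoods. Concretely, for $G=\{1\to 3,\ 2\to 3\}$ with no leaks one gets $A_{11}=-a_{31}$ but $A'_{11}=0$, and then $\det(\partial I-A)=\partial(\partial+a_{31})(\partial+a_{32})$ while $\det(\partial I-A')=\partial^2(\partial+a_{31}+a_{32})$, so already the left-hand coefficients disagree as polynomials. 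The paper's proof contains the same unverified step, so you are matching its argument; but the ``bookkeeping obstacle'' you anticipate is a genuine gap in both, not merely a matter of care.
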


\begin{proof}
    Consider the input-output equation for the model $\cm$. The left hand side coefficients of the input-output equation are given by $\det(\partial I -A)$ and the right-hand side coefficients are given by $\det(\partial I - A)^{(j,i)}$ for each $j \in S$.  Now consider the model $\cm'$.  Since the graph $\tilde{G}$ is the graph $G$ but with the arrows reversed, this means the matrix $A$ is now $A^T$ for the model $\cm'$.  Since there are multiple outputs, we get one input-output equation for each output corresponding to $j \in S$.  However, the left-hand side for each of these input-output equations are given by $\det(\partial I - A^T) y_j$, thus the coefficients are identical on the left-hand side for each output.  The right-hand side coefficients are given by $\det(\partial I - A^T)^{(i,j)}$ for each $j \in S$.  This is precisely the transpose of $\det(\partial I - A)^{(j,i)}$, thus the coefficients are identical.
\end{proof}

\begin{cor} \label{cor:source}
Suppose two source models $\cm$ and $\cm'$ are identical outside of one path on each model which on their own are permutation indistinguishable, then $\cm$ and $\cm'$ are permutation indistinguishable.
\end{cor}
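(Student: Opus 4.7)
The plan is to reduce the source case to the already-proved sink case via Lemma \ref{lemma:swapinandout}. First, I would apply that lemma to each of $\cm$ and $\cm'$ separately: reversing all edges and swapping the roles of the single input and the outputs produces sink models $\tilde{\cm}$ and $\tilde{\cm}'$ whose coefficient maps coincide with those of $\cm$ and $\cm'$, respectively, under the natural bijection $a_{ij}\leftrightarrow a_{ji}$ on edge parameters (with any leak parameters $a_{0i}$ preserved).

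Next, I would verify that $\tilde{\cm}$ and $\tilde{\cm}'$ satisfy the hypothesis of Corollary \ref{cor:sink}: by construction they are identical outside of exactly one path, namely the reversal of the path along which $\cm$ and $\cm'$ differ. The remaining piece is that this pair of differing reversed paths must itself be permutation indistinguishable. For this I would apply Lemma \ref{lemma:swapinandout} a second time, localized to the differing path only: the original source paths are permutation indistinguishable by hypothesis, and arrow reversal preserves the coefficient map, so the reversed paths inherit a bijection between their parameters that identifies their coefficient maps.

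With the hypothesis of Corollary \ref{cor:sink} in hand, I would invoke that corollary to obtain a bijection $\tilde{\Phi}$ between the parameters of $\tilde{\cm}$ and $\tilde{\cm}'$ that identifies their coefficient maps. Composing $\tilde{\Phi}$ with the two arrow-reversal renamings from the first step yields a bijection $\Phi$ from the parameters of $\cm$ to the parameters of $\cm'$ for which the coefficient maps of $\cm$ and $\cm'$ agree, which is exactly permutation indistinguishability.

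The main thing to handle carefully will be the bookkeeping of the three bijections in play (the two arrow-reversal renamings on either side and the sink-level bijection $\tilde{\Phi}$), together with making sure that leak parameters, which are untouched by arrow reversal, are threaded through consistently. Beyond this, no new combinatorics is needed: the heavy lifting is done by Corollary \ref{cor:sink}, and Lemma \ref{lemma:swapinandout} serves purely as a translation device between the source and sink settings.
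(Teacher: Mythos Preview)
Your approach is correct and is precisely the one the paper takes: the paper's entire proof is the single sentence ``This follows from Corollary~\ref{cor:sink} and Lemma~\ref{lemma:swapinandout},'' and your proposal simply unpacks that sentence, supplying the bookkeeping for the arrow-reversal bijections and the verification that the reversed differing paths remain permutation indistinguishable.
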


\begin{proof}
    This follows from Corollary \ref{cor:sink} and Lemma \ref{lemma:swapinandout}.
\end{proof}

\section{Discussion} \label{section:discussion}

In this work, we have considered a class of models containing a path from input to output and have found sufficient conditions based on the structure of the graph for the models to be indistinguishable.  To the best of our knowledge, these are the first sufficient conditions for indistinguishability of linear compartmental models based on graph structure alone.  We first showed that, given a path from compartment $1$ to $n$, with input in $1$ and output in $n$, the class of models with a leak from compartment $i$, where $i=1,..,n-1$, are all indistinguishable (see Theorem \ref{thm:leak}), as well as the model with an edge from $n$ to $n-1$ instead of a leak (see Theorem \ref{thm:terminalcycle}).  We then generalized this result to apply to moving any ``detour'' down the path (see Theorem \ref{thm:detour}).  Finally, we showed how to generalize further with multiple source-sink paths (see Corollary \ref{cor:sink} and Corollary \ref{cor:source}).

An obvious next step would be to consider models that do not contain this ``skeletal path'' structure.  For example, models that consist of cycles.  Another direction for future work is to consider indistinguishability not arising in the permutation indistinguishability framework that we have examined, but as in Example \ref{ex:nonpermute}.  

The original inspiration for this study of indistinguishability was the recent work of the authors of this text and collaborators on a novel graphical characterization of the coefficients of linear compartmental model input-output equations \cite{aim}. 
 From this graphical understanding of the input-output equations, as discussed in Section \ref{subsection:gfgeorules}, we had an immediate understanding of the  necessary conditions for indistinguishability.  Several of the proofs in this work started as graph theoretic arguments relating the coefficients of two indistinguishable models, and we suspect that this result, though not necessarily heavily utilized in this work, will be a strong tool in future works related to indistinguishability.

%{\color{red} random question: should we present ``permutation indistinguishability" as an equivalence relation? 
%\begin{itemize}
%    \item reflexive: sure, identity
%    \item symmetric: of course, bijection
%    \item transitive: should be, just compose the bijections.
%\end{itemize}
%it probably doesn't really matter, and maybe it's obvious that it is.  I was just searching for some language in one of the proofs I'm working on.}

\bibliographystyle{plain}
\bibliography{AIM}

\end{document}